\numberwithin{equation}{section}
\theoremstyle{plain}
\newtheorem{thm}{Theorem}[section] 
\newtheorem{cor}[thm]{Corollary}
\newtheorem{lem}[thm]{Lemma}
\newtheorem{ass}[thm]{Assertion}
\newtheorem{theorem*}{Theorem}[]
\newtheorem{prope}[thm]{Property}
\theoremstyle{definition}
\newtheorem{notation}{Notation}
\newtheorem{example}[thm]{Example}
\newtheorem{obs}[thm]{Observation}
\theoremstyle{remark}
\newtheorem{rem}[thm]{Remark}
\theoremstyle{note}
\newtheorem{note}[thm]{Note}
\newcommand{\R}{\mathbb{R}}
\def\accentclass@{7}
\def\makeacc@#1#2{\def#1{\mathaccent"\accentclass@#2 }}
\makeacc@\cir{017}
\date{\today}
\begin{document}

\newcommand{\COMP}{\raisebox{0.1ex}{\scriptsize $\circ$}}

\title[Finiteness theorems]
{On finiteness theorems of polynomial functions} 
\author{Satoshi KOIKE and Laurentiu Paunescu}

\dedicatory{Dedicated to Takuo Fukuda for his 80th birthday}

\address {Department of Mathematics, Hyogo University
of Teacher Education, 942-1 Shimokume, Kato,
Hyogo 673-1494, Japan}
\email {koike@hyogo-u.ac.jp}

\address{School of Mathematics and Statistics, University of Sydney, Sydney, 
NSW, 2006, Australia}
\email{laurentiu.paunescu@sydney.edu.au}

\subjclass[2010]{Primary 14P10, 32S15 Secondary 32S45, 57R45}

\keywords{polynomial function, Nash function, topological equivalence, 
semialgebraic equivalence}

\thanks{This research is partially supported by  
JSPS KAKENHI Grant Number 26287011, 20K03611. }

\newcommand{\abstracttext}{}

\maketitle


\begin{abstract}
Let $d$ be a positive integer. 
We  show a finiteness theorem for semialgebraic $\mathcal{RL}$ 
triviality of a Nash family of Nash functions defined on a Nash 
manifold, generalising Benedetti-Shiota's  finiteness theorem 
for semialgebraic $\mathcal{RL}$ equivalence classes appearing in the 
space of real polynomial functions of degree not exceeding $d$. 
We also prove  Fukuda's claim, Theorem \ref{localF}, and its semialgebraic version Theorem 
\ref{localBS}, on the finiteness of the local $\mathcal R$ types appearing in the space 
of real polynomial functions of real polynomial function germs 
of degree not exceeding $d$.

%
\end{abstract} 

\vspace{4mm} 


\section{Introduction.} 

Let $d$ be a positive integer, and 
let us denote by $P(n,1,d;\R )$ the set of  polynomial functions 
from $\R^n$ to $\R$ 
of degree not exceeding $d$. 
The space $P(n,1,d;\R )$ is identified with the Euclidean space $\R^{L+1},$
regarded as  the coefficient space of all polynomials in $P(n,1,d;\R )$. 

We say that two polynomials $f, g : \R^n \to \R$ 
are {\em topologically $\mathcal{RL}$ equivalent} (respectively {\em topologically $\mathcal{R}$ equivalent}), if there exist 
homeomorphisms $\sigma : \R^n \to \R^n$ and $\tau : \R \to \R$ such that 
$\tau \circ f = g \circ \sigma$ (respectively $ f = g \circ \sigma$). 
Furthermore we say that two polynomials 
$f, g : \R^n \to \R$ are {\em semialgebraically $\mathcal{RL}$ equivalent} (respectively {\em semialgebraically  $\mathcal{R}$ equivalent})
if we can take the $\sigma$ and $\tau$ as semialgebraic homeomorphisms (respectively $\sigma$ semialgebraic). 

In \cite{fukuda1} T. Fukuda established the following finiteness theorem
for polynomials in $P(n,1,d;\R )$. 

\begin{thm}\label{fukudafinite}(Fukuda's finiteness theorem) 
The number of topological $\mathcal{RL}$ types appearing in $P(n,1,d;\R )$ 
is finite. 
\end{thm}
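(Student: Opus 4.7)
The plan is to view $P(n,1,d;\R) \cong \R^{L+1}$ as a finite-dimensional semialgebraic parameter space for the universal polynomial family, and apply a semialgebraic triviality theorem to the total family after a uniform compactification that controls behavior at infinity. Concretely, define the polynomial (hence semialgebraic) map $F: \R^n \times \R^{L+1} \to \R \times \R^{L+1}$ by $F(x,a) = (f_a(x), a)$, where $f_a$ is the polynomial with coefficient vector $a$.

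Since $\R^n$ and $\R$ are both non-compact, $F$ is not proper and Thom-type triviality theorems do not apply directly. To remedy this, I would embed $\R^n \hookrightarrow \proj^n(\R)$ and $\R \hookrightarrow \proj^1(\R)$ and take the closure $\bar X$ of the graph of $F$ in $\proj^n(\R) \times \proj^1(\R) \times \R^{L+1}$. Because all polynomials in the family have degree at most $d$, $\bar X$ is a semialgebraic set and the induced projection $\bar F: \bar X \to \proj^1(\R) \times \R^{L+1}$ is a proper semialgebraic map; the complement $\bar X \setminus (\R^n \times \R \times \R^{L+1})$ is a semialgebraic ``divisor at infinity''.

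Next I would apply Thom's second isotopy lemma in the semialgebraic category (equivalently, a stratified version of Hardt's semialgebraic triviality theorem) to $\bar F$ equipped with a Whitney stratification in which the divisor at infinity is a union of strata. This yields a finite semialgebraic partition $\R^{L+1} = \bigsqcup_{i=1}^N A_i$ such that over each $A_i$ the map $\bar F$ is semialgebraically trivial as a stratified proper map. The trivializing homeomorphisms preserve the strata, hence send the finite parts of source and target to themselves; restricting gives, for every pair $a, a' \in A_i$, homeomorphisms $\sigma: \R^n \to \R^n$ and $\tau: \R \to \R$ with $\tau \circ f_a = f_{a'} \circ \sigma$, so that $f_a$ and $f_{a'}$ are topologically $\mathcal{RL}$ equivalent. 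Finiteness of the partition yields the theorem.

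The main obstacle is this third step: obtaining a trivialization compatible with both compactifying divisors simultaneously. A crude application of Hardt's theorem to $F$ itself would give semialgebraic trivialization of fibres over strata but no control at infinity, and would not automatically produce the target homeomorphism $\tau$. Likewise, compactifying only the source is insufficient. The key technical point is therefore to choose the stratification of $\bar X$ finely enough that the closure of $\R^n \times \R \times A_i$ and the divisor at infinity together form a Whitney (or $(a_f)$-regular) stratified map, and then to invoke the semialgebraic Thom isotopy lemma with stratified controlled vector fields that extend across the divisor and integrate to the required $\sigma$ and $\tau$.
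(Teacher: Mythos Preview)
Your proposal is correct and follows essentially the same approach the paper attributes to Fukuda (and reproduces in the proof of the more general Theorem \ref{finitenessNF}): compactify source and target so that the maps become proper, construct Whitney stratifications compatible with the divisor at infinity via Fukuda's Lemma (Lemma \ref{stratifiedmapping}) so that the map is Thom $(a_f)$-regular, and apply Thom's 2nd Isotopy Lemma to obtain $\mathcal{RL}$ triviality over each piece of a finite partition of the parameter space. The only minor technical difference is that the paper (following Fukuda) compactifies $\R^m$ in the product $(\R P^1)^m = \R P_{[m]}$ rather than in $\R P^m$, but your graph-closure formulation achieves the same end.
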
 

On the other hand, R. Benedetti and M. Shiota strengthened 
Fukuda's finiteness theorem in \cite{benedettishiota} as follows. 

\begin{thm}\label{benedettishiota}(Benedetti-Shiota's finiteness theorem) 
The number of semialgebraic $\mathcal{RL}$ types appearing in $P(n,1,d;\R )$ 
is finite. 
\end{thm}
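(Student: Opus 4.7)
The plan is to regard the whole space $P(n,1,d;\R) \cong \R^{L+1}$ as a semialgebraic parameter space and to produce a finite semialgebraic stratification $\R^{L+1} = \bigsqcup_\alpha S_\alpha$ such that, along each stratum $S_\alpha$, the polynomials $\{f_a\}_{a\in S_\alpha}$ are mutually semialgebraically $\mathcal{RL}$-equivalent. Finiteness of the semialgebraic $\mathcal{RL}$ types then follows immediately, since by semialgebraicity the stratification has finitely many pieces.

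First I would introduce the universal graph map
\[
F : \R^{L+1} \times \R^n \longrightarrow \R^{L+1} \times \R, \qquad (a,x) \longmapsto (a, f_a(x)),
\]
which is a polynomial, hence semialgebraic, map commuting with the projections to $\R^{L+1}$. Triviality of $F$ along a stratum of $\R^{L+1}$ (with base point $a_0$) in a manner that is simultaneously compatible with the source projection and with the target projection to $\R^{L+1}\times \R$ is exactly the assertion that there are semialgebraic homeomorphisms $\sigma_a$ of $\R^n$ and $\tau_a$ of $\R$, depending semialgebraically on $a$, with $\tau_a \circ f_a = f_{a_0} \circ \sigma_a$.

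To obtain such a trivialization semialgebraically I would appeal to Shiota's semialgebraic $C^0$ Thom--Mather isotopy theorem (the semialgebraic refinement of the theorem that underlies Fukuda's Theorem \ref{fukudafinite}). The scheme is: take a semialgebraic Whitney stratification of the map $F$ compatible with the projections to both factors and with the zero-set of the discriminant-type data; this exists by the semialgebraic stratification theorems available in Shiota's framework. Restricting to the stratification that this induces on the parameter space $\R^{L+1}$ produces the desired $\{S_\alpha\}$. Over each $S_\alpha$, Shiota's semialgebraic triviality yields the homeomorphisms $\sigma_a, \tau_a$ globally in $a\in S_\alpha$, and hence for any two $a,a'\in S_\alpha$ the composition provides a semialgebraic $\mathcal{RL}$-equivalence between $f_a$ and $f_{a'}$.

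The main obstacle is that the family $F$ is highly non-proper: neither $\R^n$ in the source nor $\R$ in the target is compact, so a naive application of Thom--Mather isotopy fails to control the behavior of the trivializing homeomorphisms at infinity. I would address this by a semialgebraic compactification: replace $\R^n$ by the interior of a closed semialgebraic ball $\bar B^n$ via a semialgebraic diffeomorphism (e.g.\ $x \mapsto x/\sqrt{1+|x|^2}$), and similarly compactify the target $\R$, then extend the graph of $F$ semialgebraically across the added boundary. One must then build the Whitney stratification so that the added boundary at infinity forms a union of strata, and verify that along strata of $\R^{L+1}$ the isotopies preserve this boundary setwise. This is the technical heart of the Benedetti--Shiota argument; once it is in place, restricting the trivializing semialgebraic homeomorphisms back to the open parts recovers the semialgebraic $\mathcal{RL}$-equivalences on $\R^n$ and $\R$, and Theorem \ref{benedettishiota} follows.
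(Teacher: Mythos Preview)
The paper itself does not prove Theorem~\ref{benedettishiota}; it is quoted from \cite{benedettishiota}. What the paper does prove is the generalisation Theorem~\ref{finitenessNF}, whose specialisation to $M=\R^n$ and $J=\R^{L+1}$ recovers the result (cf.\ Theorem~\ref{fukudabenedettishiota}). Your outline follows exactly the strategy of that proof: form the universal family $(F,id_J)$, stratify so as to obtain a Thom-regular stratified mapping over a finite partition of the parameter space into Nash open simplices, compactify to make the maps proper, and apply Shiota's semialgebraic 2nd Isotopy Lemma (Theorem~\ref{s2ndisotopy}, via Lemma~\ref{SATlemma}). The only notable difference is the compactification device: you propose the semialgebraic ball compactification $x\mapsto x/\sqrt{1+|x|^2}$ of source and target, whereas the paper's proof of Theorem~\ref{finitenessNF} uses projectivisation into products of real projective lines $\R P_{[r]}$ after an Artin--Mazur reduction, so that the extended map is the restriction of a rational projection and the boundary-at-infinity strata are algebraic. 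Both routes work; yours is closer in spirit to Fukuda's original argument for Theorem~\ref{fukudafinite}, while the paper's choice is dictated by the Nash-manifold generality of Theorem~\ref{finitenessNF}.
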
 

Based on the above finiteness theorems and their proofs, 
we discuss finiteness properties for $\mathcal{RL}$ or $\mathcal{R}$ 
equivalence of polynomials in \S 4.
In this introduction we mention two local results from those 
finiteness properties. 
Let us denote  the polynomials without free term by $P_0(n,1,d;\R )=\{f\in P(n,1,d;\R)|f(0)=0\},$  
naturally identified with the Euclidean space 
$\R^{L}$.
We say that two polynomials $f, g \in P(n,1,d;\R )$ 
are {\em locally topologically $\mathcal{R}$ equivalent at $0\in\R^n$, and write $\mathcal{R}_0$}, if there exists 
a local homeomorphism $\sigma : (U\subset \R^n,0) \to (V\subset \R^n,0)$ between two open neighbourhoods of the origin,  such that 
$f_{/U}= g_{/V}\circ \sigma$, and that two polynomials
$f, g \in P(n,1,d;\R )$ are 
{\em locally semialgebraically $\mathcal{R}$ equivalent at $0\in\R^n$, and write $\mathcal{R}_0$},
if we can take the $\sigma$ as a local semialgebraic homeomorphism.
Then we will prove the following new finiteness results for the local $\mathcal{R}_0$ 
euivalence of polynomial functions.

\begin{thm}\label{localF}
(Fukuda's local $\mathcal{R}$ finiteness theorem) 
(\cite{fukuda3})
The number of local topological $\mathcal{R}_0$ types 
appearing in $P_0(n,1,d;\R )$ is finite.
In addition, each topological $\mathcal{R}_0$ equivalence class in
$P_0(n,1,d;\R )$ is a semialgebraic subset of $P_0(n,1,d;\R )$.
\end{thm}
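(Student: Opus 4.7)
The plan is to derive Theorem~\ref{localF} from its stronger semialgebraic analogue (the companion Theorem~\ref{localBS}): finiteness of local \emph{semialgebraic} $\mathcal{R}_0$-classes in $P_0(n,1,d;\R)$, each of them a semialgebraic subset. Once that is available, Theorem~\ref{localF} is formal: semialgebraic $\mathcal{R}_0$-equivalence refines topological $\mathcal{R}_0$-equivalence, so each topological class is a finite union of semialgebraic classes, hence semialgebraic, and in total there are only finitely many topological classes.

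For the semialgebraic analogue, the natural input is the paper's main $\mathcal{RL}$-triviality theorem for Nash families, applied to the tautological polynomial family
\begin{equation*}
F:\R^n\times P_0(n,1,d;\R)\longrightarrow\R,\qquad F(x,a)=f_a(x),
\end{equation*}
whose zero-section $\{0\}\times P_0(n,1,d;\R)$ is contained in $F^{-1}(0)$ because $f_a(0)=0$ for every $a$. Trivialising the family along this section gives a finite semialgebraic stratification $P_0(n,1,d;\R)=\bigsqcup_{i=1}^{N}S_i$ together with, for each connected stratum $S_i$ and base point $a_0\in S_i$, semialgebraic germs of homeomorphisms $\sigma_a:(\R^n,0)\to(\R^n,0)$ and $\tau_a:(\R,0)\to(\R,0)$ depending semialgebraically on $a\in S_i$, with $\sigma_{a_0}=\tau_{a_0}=\mathrm{id}$ and $\tau_a\circ f_a=f_{a_0}\circ\sigma_a$. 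This already yields finitely many local semialgebraic $\mathcal{RL}_0$-types.

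To upgrade from $\mathcal{RL}_0$ to $\mathcal{R}_0$, I would use the fact that on a connected stratum the family $\tau_a$ starts at the identity and fixes $0$, so each $\tau_a$ is an orientation-preserving semialgebraic homeomorphism germ of $(\R,0)$, and then invoke an absorption lemma: a Nash function germ $h:(\R^n,0)\to(\R,0)$ and $\tau\circ h$ are semialgebraically $\mathcal{R}_0$-equivalent whenever $\tau:(\R,0)\to(\R,0)$ is an orientation-preserving semialgebraic homeomorphism germ. Rewriting $f_a=\tau_a^{-1}\circ f_{a_0}\circ\sigma_a$, the lemma identifies each semialgebraic $\mathcal{R}_0$-class with a finite union of strata $S_i$, which is semialgebraic, and bounds the number of classes by $N$.

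The main obstacle is the absorption lemma. Away from the critical locus of $h$, one can move points along a gradient-like vector field from the level set $h^{-1}(v)$ to $h^{-1}(\tau(v))$, but continuity across the critical values, and in particular at $0$, needs care. I would handle this by picking a semialgebraic isotopy $\tau_s$, $s\in[0,1]$, from the identity to $\tau$, lifting $\partial\tau_s/\partial s$ to a controlled semialgebraic vector field tangent to the level sets of $h$ and vanishing at the origin, and integrating it using the local conic structure of the pair $(\R^n,h^{-1}(0))$ to produce a semialgebraic homeomorphism $\sigma$ defined in a full neighbourhood of $0$.
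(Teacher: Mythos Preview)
Your overall route is the reverse of the paper's: you want to deduce Theorem~\ref{localF} from the semialgebraic Theorem~\ref{localBS}, whereas the paper proves Theorem~\ref{localF} directly and obtains Theorem~\ref{localBS} afterwards, by combining the local semialgebraic $\mathcal{R}_0\mathcal{L}_0$ finiteness (Theorem~\ref{semialgebraiclo0}) with Shiota's absorption result (Theorem~\ref{shiotabirk}). In fact your ``absorption lemma'' is exactly the special case of Theorem~\ref{shiotabirk} needed here, so modulo that citation your outline is essentially the paper's own proof of Theorem~\ref{localBS}; as a proof of Theorem~\ref{localF} it is taking an unnecessarily heavy detour.

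The paper's direct argument for Theorem~\ref{localF} rests on an observation you do not exploit: for an analytic germ $f:(\R^n,0)\to(\R,0)$ with $0$ singular, the Curve Selection Lemma gives $S(f)\subset f^{-1}(0)$ as germs (Property~\ref{singularpts}). Hence, after a finite subdivision of the parameter space into Nash open simplices $Q_i$, one can arrange that no nonzero critical values accumulate on $\{0\}\times Q_i$ (Observation~\ref{criticalvalue} and condition~(\ref{approach})). Along any Nash arc $J$ with $\overline{J}\subset Q_i$ there is then a uniform $\epsilon_0>0$ such that the target neighbourhood $[-\epsilon_0,\epsilon_0]\times J$ meets only the stratum $\{0\}\times J$ and its open adjacent strata. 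The controlled lift of the parameter vector field to the target may therefore be taken as $w(y,s)=(0,v(s))$, constant in the value coordinate; the resulting target homeomorphism is the \emph{identity}, and Thom's second isotopy lemma already outputs an $\mathcal{R}_0$-equivalence between any two $f_{r_1},f_{r_2}$ with $r_1,r_2\in Q_i$. No absorption step is needed, and each $Q_i$ lies in a single topological $\mathcal{R}_0$-class, which gives the semialgebraicity assertion for free.

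Your sketch of the absorption lemma has a genuine gap in the semialgebraic category: the flow of a semialgebraic (even Nash) vector field is in general \emph{not} semialgebraic, so ``lift $\partial\tau_s/\partial s$ to a controlled semialgebraic vector field and integrate'' will typically leave the category. This is precisely why Shiota's proof of Theorem~\ref{shiotabirk} (Theorem~II.7.1 of \cite{shiota2}) is delicate and cannot be replaced by a two-line isotopy argument. Even if you retreat to the purely topological statement needed for Theorem~\ref{localF}, continuity of the flow across $h^{-1}(0)$ when $0$ is a non-isolated critical point still requires a real estimate (a \L ojasiewicz-type bound on the time to reach the level, or a careful use of local conic structure); your sketch does not supply it. The cleanest fix is either to cite Theorem~\ref{shiotabirk} outright, or to adopt the paper's direct strategy and arrange $\tau=\mathrm{id}$ from the start.
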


\begin{thm}\label{localBS}
(local semialgebraic $\mathcal{R}_0$ finiteness theorem)
The number of local semialgebraic $\mathcal{R}_0$ types 
appearing in $P_0(n,1,d;\R )$ is finite.
\end{thm}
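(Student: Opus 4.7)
The plan is to combine Theorem~\ref{localF} with a local semialgebraic triviality result for the tautological family of polynomials, mirroring the way Benedetti--Shiota's theorem enhances Fukuda's global theorem.

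First, by Theorem~\ref{localF} there is a finite partition
\[
P_0(n,1,d;\R ) = C_1 \sqcup \cdots \sqcup C_k
\]
into topological $\mathcal{R}_0$ equivalence classes, each of which is semialgebraic. It therefore suffices to show that each $C_i$ decomposes into finitely many local semialgebraic $\mathcal{R}_0$ classes. To that end I would introduce the tautological evaluation
\[
F : P_0(n,1,d;\R ) \times \R^n \longrightarrow \R, \qquad F(f,x) = f(x),
\]
which is polynomial in both variables and in particular Nash. Restricted to $C_i \times \R^n$ together with the zero section $f \mapsto (f,0)$, it is a Nash family of polynomial function germs at the origin, to which the paper's Nash-family generalisation of Benedetti--Shiota (in its $\mathcal R$-only, germ-at-zero form) may be applied.

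This application should yield a finite semialgebraic stratification $C_i = S_{i,1} \sqcup \cdots \sqcup S_{i,m_i}$ such that, over each $S_{i,j}$, the family $F$ is locally semialgebraically $\mathcal R_0$-trivial at $0$: for any $f,g \in S_{i,j}$ there is a germ of semialgebraic homeomorphism $\sigma : (\R^n,0) \to (\R^n,0)$ with $f = g \circ \sigma$ near $0$. Taking the union of the $S_{i,j}$ over all $i$ then gives a finite semialgebraic partition of $P_0(n,1,d;\R )$ each cell of which is contained in a single local semialgebraic $\mathcal R_0$ class, which proves the theorem.

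The principal obstacle is the local, origin-preserving, semialgebraic trivialisation step. The standard Hardt--Coste--Shiota triviality theorems give global-in-$x$ semialgebraic trivialisations but neither automatically fix the distinguished point $0$ nor directly address germ-level behaviour. The remedy is to work inside a small closed semialgebraic ball around $0$ and to apply a semialgebraic version of Thom's second isotopy lemma to a semialgebraic stratification in which $\{0\}$ appears as a zero-dimensional stratum; integration of a stratified semialgebraic rugose vector field tangent to $C_i \times \{0\}$ then produces origin-fixing semialgebraic trivialisations. Once that local Nash-family triviality is in place -- which is essentially the paper's main theorem in its germ form -- the deduction of Theorem~\ref{localBS} from Theorem~\ref{localF} is formal, exactly parallel to the way Benedetti--Shiota's global theorem refines Fukuda's.
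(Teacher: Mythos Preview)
Your route diverges from the paper's and rests on a step the paper explicitly flags as unavailable. The paper does \emph{not} prove Theorem~\ref{localBS} by upgrading the isotopy argument behind Theorem~\ref{localF} to the semialgebraic category. Instead it argues in two lines: Theorem~\ref{semialgebraiclo0} gives finitely many local semialgebraic $\mathcal{R}_0\mathcal{L}_0$ types in $P_0(n,1,d;\R)$, and Corollary~\ref{RvsRL} (derived from Shiota's Theorem~\ref{shiotabirk}, i.e.\ Theorem~II.7.1 of \cite{shiota2}) says that if $f,g\in P_0(n,1,d;\R)$ are locally semialgebraically $\mathcal{R}_0\mathcal{L}_0$ equivalent then $f$ is locally semialgebraically $\mathcal{R}_0$ equivalent to $g$ or to $-g$. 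Hence each $\mathcal{R}_0\mathcal{L}_0$ class contains at most two $\mathcal{R}_0$ classes, and finiteness follows.

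Your plan, by contrast, needs a local semialgebraic $\mathcal{R}_0$-\emph{triviality} theorem for the tautological family, what you call ``the paper's main theorem in its germ form'' for $\mathcal{R}$ only. No such result is proved here: Theorem~\ref{finitenessNF} yields only semialgebraic $\mathcal{RL}$ triviality, and Remark~\ref{remark41} explains why the mechanism behind Theorem~\ref{localF} does not produce $\mathcal{R}$ triviality over the strata $Q_i$. That proof works by restricting to a one-dimensional Nash arc $J\subset Q_i$ joining two chosen parameters; over $\overline{J}$ one can pick a uniform $\epsilon_0>0$ keeping nonzero critical values out of $[-\epsilon_0,\epsilon_0]\times J$, and then force the target vector field to be $(0,v(s))$, so the target trivialisation is the identity in $y$. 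Over all of $Q_i$ no uniform $\epsilon_0$ exists, so one gets pairwise $\mathcal{R}_0$ \emph{equivalence} but not $\mathcal{R}_0$ \emph{triviality} of the family. Your proposed remedy---stratify with $\{0\}$ as a stratum and integrate a semialgebraic rugose field---only forces $\sigma(0)=0$ and $\tau(0)=0$; it gives semialgebraic $\mathcal{R}_0\mathcal{L}_0$ triviality, not $\mathcal{R}_0$ triviality, because nothing in Shiota's second isotopy lemma (Theorem~\ref{s2ndisotopy}) lets you prescribe the target homeomorphism to be the identity on a whole interval. The passage from $\mathcal{RL}$ to $\mathcal{R}$ in the semialgebraic germ setting is a genuinely separate input, and the paper imports it from Shiota rather than manufacturing it via isotopy.
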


As a generalisation of the above finiteness results 
(Theorems \ref{fukudafinite} and \ref{benedettishiota})
for topological or semialgebraic $\mathcal{RL}$ equivalence classes 
appearing in the space of real polynomial functions of a fixed degree, 
we shall show the following result in \S 5.

\begin{thm}\label{finitenessNF}
Let $\{ f_t : M \to \R \ | \ t \in J \}$ be a Nash family of Nash 
functions defined on a Nash manifold $M$ with a semialgebraic 
parameter space $J$.
Namely, the function $F : M \times J \to \R$ defind by 
$F(x,t) := f_t(x)$ is a Nash function. 
Then there exists a finite subdivision of $J$ into Nash open simplices 
$J = Q_1 \cup \cdots \cup Q_u$ such that 
$$
\{ f_t : M \to \R \ | \ t \in Q_i \} 
$$
is semialgebraically $\mathcal{RL}$ trivial over each $Q_i$. 
\end{thm}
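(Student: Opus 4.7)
The plan is to deduce Theorem~\ref{finitenessNF} from the semialgebraic version of Hardt's triviality theorem in its ``family'' form, applied to the Nash map
\[
\Phi : M \times J \longrightarrow \R \times J, \qquad \Phi(x,t) := (F(x,t), t),
\]
which fibers over $J$ via the second-factor projections on both sides.

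First, I would reformulate the conclusion in terms of $\Phi$. Semialgebraic $\mathcal{RL}$ triviality of $\{f_t\}_{t \in Q_i}$ based at $t_i \in Q_i$ is precisely a pair of semialgebraic homeomorphisms $\Sigma : M \times Q_i \to M \times Q_i$ and $T : \R \times Q_i \to \R \times Q_i$, both lying over $Q_i$, satisfying
\[
\Phi \circ \Sigma \;=\; T \circ \bigl( F(\cdot, t_i) \times \mathrm{id}_{Q_i} \bigr),
\]
with fibrewise restriction to $\{t\}$ giving semialgebraic homeomorphisms $\sigma_t : M \to M$ and $\tau_t : \R \to \R$ with $\tau_t \circ f_{t_i} = f_t \circ \sigma_t$. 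Hence the theorem reduces to producing a finite subdivision of $J$ into Nash open simplices $Q_i$ over each of which $\Phi$, regarded as a Nash family of maps parameterised by $J$, admits such a simultaneous source-and-target trivialisation.

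Second, I would invoke Shiota's Nash triangulation theorem together with the Coste–Shiota semialgebraic triviality theorem for Nash families. These produce, for a Nash map fibred over a semialgebraic base, a finite decomposition of the base into Nash open simplices over each of which the family is semialgebraically $\mathcal{RL}$-trivial, the source and target trivialisations being extracted simultaneously from compatible Nash triangulations of $M \times J$, $\R \times J$ and $J$ that are also compatible with $\Phi$, with the critical set of $F$ relative to $J$, and with the discriminant. This is the same machinery that Benedetti–Shiota deployed in the special case $J = P(n,1,d;\R )$ with $F$ the universal polynomial to obtain Theorem~\ref{benedettishiota}; here we need only its parametric form.

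The main obstacle is the lack of integration of Nash vector fields: unlike in the smooth category, one cannot produce $\Sigma$ and $T$ by lifting a stratified vector field on $J$. Instead the trivialisations must be assembled combinatorially from the simplicial structure of a sufficiently refined Nash triangulation, using a cone construction from the basepoint $t_i$ inside the open Nash simplex $Q_i$. A secondary technical point is that $M$ need not be compact, so Hardt's theorem must be applied to a semialgebraic compactification (or to a Nash exhaustion by compacts) of $M$, after which the pieces are glued to yield global semialgebraic $\mathcal{RL}$-triviality of $\{f_t : M \to \R \mid t \in Q_i\}$ over each $Q_i$, as required.
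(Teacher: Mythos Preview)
Your proposal has a genuine gap: you are essentially assuming what has to be proved. Hardt's semialgebraic triviality theorem trivialises the \emph{source} of a semialgebraic map over a partition of the target; it does not give simultaneous source-and-target ($\mathcal{RL}$) trivialisation of a map fibred over a base. The ``Coste--Shiota semialgebraic triviality theorem for Nash families'' you invoke is not a standard citable statement, and the mechanism you sketch --- extracting $\Sigma$ and $T$ ``simultaneously from compatible Nash triangulations'' by a cone construction --- is not established machinery; making that work is exactly the content of the theorem. Your remark that one cannot integrate Nash vector fields and must therefore argue combinatorially also misreads the situation: Shiota's semialgebraic version of Thom's 2nd Isotopy Lemma (Theorem~\ref{s2ndisotopy}) exists precisely to produce semialgebraic trivialising homeomorphisms for a proper Thom-regular stratified map, and it is the tool actually used both here and in Benedetti--Shiota's proof of Theorem~\ref{benedettishiota}. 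Finally, your treatment of non-compactness by ``compactification then gluing'' is not carried out; semialgebraic trivialisations over different compact pieces have no reason to match on overlaps, so this step is not routine.

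The paper's argument is quite different and addresses each of these points. It first replaces $M$ by a nonsingular affine algebraic variety (Theorem~\ref{algebraicstructure}) and $J$ by $\R^d$ via Theorem~\ref{lojasiewicz}, then applies the Artin--Mazur theorem (Theorem~\ref{artinmazur}) to realise the Nash map $\Delta = (F,\mathrm{id})$ as the restriction of a linear projection to a union $L$ of components of an algebraic variety $X$. Projectivisation of $X$ inside a product of real projective lines makes the maps $\hat\Pi$ and $\hat q$ proper, which is how non-compactness is handled. One then constructs finite Whitney stratifications of Nash class with fibrewise Thom $(a_{\hat\Pi_t})$-regularity (Assertion~\ref{prestratification}, via Lemma~\ref{stratifiedmapping}), and Lemma~\ref{SATlemma} --- which packages Shiota's semialgebraic 2nd Isotopy Lemma --- gives semialgebraic $\mathcal{RL}$ triviality over each Nash open simplex after a further finite subdivision. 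Compatibility of the stratifications with $L$ and with $\R\times\R^d$ then transports the triviality back to the original family $\{f_t\}$.
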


\noindent For the definition of a Nash open simplex, see \S 3.3.

For  the reader convenience, we  recall  several classic  notions. Firstly, in \S 2, that of  Whitney's  regularity and of 
stratified mappings, followed by the notion of  controlled tube systems for  stratifications. 
At the end of this section we introduce the semialgebraic and Nash properties 
which will be used to prove  our results.
Secondly,  in  \S 3, we recall Thom's 1st and 2nd Isotopy Lemmas and Shiota's semialgebraic 
versions of Thom's Isotopy Lemmas as tools to establish topological 
$\mathcal{RL}$ triviality and semialgebraic $\mathcal{RL}$ triviality, respectively.


\bigskip
\section{Preliminaries.}
\label{SPTIL}


\subsection{Whitney-regularity and stratified mapping}\label{Whitneyregular}

Let $X, \ Y$ be disjoint $C^1$ submanifolds of $\R^m$, and 
let $y_0$ be a point in $Y \cap \overline{X}$.
We say that the pair $(X,Y)$ satisfies {\em Whitney's condition $(b)$
at} $y_0$, if for any sequences $\{ x_i \}$ in $X$ and $\{ y_i \}$ 
in $Y$ which tend to $y_0$ such that the tangent planes $T_{x_i}X$ tend to some $\dim X$-plane
$\tau \subset \R^n$ and the secants $\widehat{x_i y_i}$ tend (in the projective 
space $\mathbb{P}^{m-1}$) to some line $\ell$, then we have 
$\ell \subset \tau$.
If $(X,Y)$ satisfies Whitney's condition $(b)$ at any point of $Y$, 
we say that $X$ is {\em Whitney $(b)$-regular over} $Y$.
For the properties of the Whitney regularity, see 
H. Whitney \cite{whitney1, whitney2}, J. N. Mather \cite{mather} and 
D. J. A. Trotman \cite{trotman1, trotman2}.

Let $A \subset \R^m$ admit a $C^1$ stratification ${\mathcal S}(A)$ 
satisfying the {\em frontier condition}, namely if $\overline{X} \cap Y 
\ne \emptyset$ for $X,\ Y \in {\mathcal S}(A)$, then we have 
$\overline{X} \supset Y$.  
We say that the stratification ${\mathcal S}(A)$ of $A$ is a {\em Whitney 
stratification}, if for any strata $X,\ Y \in {\mathcal S}(A)$ with 
$\overline{X} \supset Y,\ X$ is Whitney ($b$)-regular over $Y$.
In the sequel  we will assume the frontier condition whenever we consider 
stratifications of  subsets  in Euclidean spaces.

Let $X, \ Y$ be disjoint $C^1$ submanifolds of $\R^m$, and
let $y_0$ be a point in $Y \cap \overline{X}$.
Let $f : X\cup Y \to \R^r$ be a  mapping such that 
both  restricted mappings $f|_X : X \to \R^r$ and 
$f|_Y : Y \to \R^r$ are $C^1$ and of constant ranks. 
We say that the pair $(X,Y)$ satisfies {\em Thom's condition $(a_f)$
at} $y_0$, if for any sequence $\{ x_i \}$ in $X$ tending to $y_0$ 
such that $ker \ d(f|_X)_{x_i}$ tends to $\kappa$, then we have 
$ker \ d(f|_Y)_{y_0} \subset \kappa$, where $ker \ d(f|X)_x$ denotes 
the kernel of the differentiable of $f|_X$ at $x$. 
If $(X,Y)$ satisfies Thom's condition $(a_f)$ at any point of $Y$, 
we say that $X$ is {\em Thom $(a_f)$-regular over} $Y$ 
or {\em Thom regular over} $Y$. 
For the properties of the Thom $(a_f)$ regularity, see 
\cite{gibsonetal, koike1, mather, thom}.

Let $A \subset \R^m$ and $B \subset \R^r$ and let $f : A \to B$ be 
a mapping.
Assume that $A$ and $B$ admit $C^1$ stratifications ${\mathcal S}(A)$ and 
${\mathcal S}(B)$ respectively.
We call $f$ a {\em stratified mapping}, if for any stratum 
$X \in {\mathcal S}(A)$, there exists a stratum $U \in {\mathcal S}(B)$ 
such that $f|_X : X \to U$ is a surjective  $C^1$ submersion. 
Let us remark that we are assuming also ``surjectiveness" for a stratified mapping 
in this paper.
We call the stratified mapping 
$f : (A,\mathcal{S}(A)) \to (B,\mathcal{S}(B))$ {\em Thom regular},
if for any $X, Y \in \mathcal{S}(A)$ with $\overline{X} \supset Y$, 
$X$ is Thom $(a_f)$-regular over $Y$.

\subsection{Controlled tube system}\label{CTS}
Let $M$ be a $C^2$ manifold, and let $N$ be a $C^2$ submanifold of $M$. 
A {\em tube} at $N$ in $M$ is a triple $T = (|T|,\pi , \rho )$, 
where $|T|$ is an open neighbourhood of $N$ in $M$, 
$\pi : |T| \to N$ is a submersive $C^2$ retraction, 
and $\rho$ is a nonnegative $C^2$ function on $|T|$ such that 
$\rho^{-1}(0) = N$ and each point $x$ of $N$ is the unique 
critical point (nondegenerate) of the restriction of $\rho$ to 
$\pi^{-1}(x)$.
In the case when $N$ is an open submanifold of $M$, we
let $T = (|T|,\pi, \rho )$ where $|T| = N$, $\pi = id_N$ 
and $\rho = 0$ for convenience.
Here $id_N$ denotes the identity mapping $id_N :N \to N$. 

Let $A \subset M$, and let $\mathcal{S}(A) = \{ X_i \}$ be a $C^2$ 
stratification of $A$ (the strata are $C^2$ submanifolds). 
A {\em tube system} for the stratification $\{ X_i \}$ consists of one tube 
$T_i = (|T_i|,\pi_i,\rho_i )$ at each $X_i$. 
We say that a tube system $\{ T_i\}$ is {\em controlled} if for any $i$ and 
$j$ with $X_i \subset \overline{X_j}$, the following commutative 
relations hold: 
$$
\pi_i \circ \pi_j (x) = \pi_i (x), \ \rho_i \circ \pi_j (x) = \rho_i (x) \ 
\text{for} \ x \in |T_i| \cap |T_j|.
$$
We call $\{ T_i \}$ {\em weakly controlled} if the first relation holds.


Let $M$ and $P$ be $C^2$ manifolds, let $f : A \to B$ be a 
mapping between $A \subset M$ and $B \subset P$. Let 
$\mathcal{S}(A) = \{ W_k\}$ and $\mathcal{S}(B) = \{ V_j^{\prime}\}$ be 
$C^2$ stratifications, and let $\{ T_k = (|T_k|,\pi_k ,\rho_k )\}$ and 
$\{ T_j^{\prime} = (|T_j|^{\prime},\pi_j^{\prime},\rho_j^{\prime})\}$
be tube systems for $\{ W_k\}$ and $\{ V_j^{\prime}\}$, respectively. 
We call $\{ T_k\}$ {\em controlled over} $\{ T_j^{\prime}\}$ if $\{ T_k\}$ 
satisfies the following relations: 

(1) For any $k$ and $\ell$ with $W_k \subset \overline{W_{\ell}}$, 
$\pi_k \circ \pi_{\ell} (x) = \pi_k (x)$ on $|T_k| \cap |T_{\ell}|$. 

(2) For any $k$ and $\ell$ with $W_k \subset \overline{W_{\ell}}$ and 
$f(W_k) \cup f(W_{\ell}) \subset V_j^{\prime}$ for some $j$, 
$\rho_k \circ \pi_{\ell} (x) = \rho_k(x)$ on $|T_k| \cap |T_{\ell}|$. 

(3) For any $k$ and $j$ with $f(W_k) \subset V_j^{\prime}$, $f|_{W_k}$ of class  $C^2$ and
$f(|T_k|) \subset |T_j^{\prime}|$ and $f \circ \pi_k = \pi_j^{\prime} \circ f$ 
on $A \cap |T_k|$.

\subsection{Semialgebraic and Nash properties.}\label{saproperty}
In this subsection we recall some important semialgebraic and Nash 
properties which can be useful tools when we apply Thom's Isotopy Lemmas 
to a family of polynomial functions or more generally to a family of 
Nash functions.
We first recall the notions of a Nash manifold and of a Nash mapping.

Let $s = 1, 2, \cdots , \infty, \omega.$ 
A $C^s$ submanifold of $\R^m$ is called a $C^s$ {\em Nash manifold}, if 
it is semialgebraic in $\R^m$. 
In this paper, a submanifold always means a regular submanifold. 
By B. Malgrange \cite{malgrange}, a $C^{\infty}$ Nash manifold is 
a $C^{\omega}$ manifold.
We simply call a $C^{\infty}$ Nash manifold a {\em Nash manifold}.

Let $M \subset \R^m$ and $N \subset \R^n$ be $C^s$ Nash manifolds.
A $C^r$ mapping $f : M \to N$, $r \le s$, is called 
a $C^r$ {\em Nash mapping}, if the graph of $f$ is semialgebraic 
in $\R^m \times \R^n$.
It is known also by B. Malgrange \cite{malgrange} that a $C^{\infty}$ 
Nash mapping is a $C^{\omega}$ mapping.
We simply call a $C^{\infty}$ Nash mapping a {\em Nash mapping}.

Let $A \subset \R^m$ and $B \subset \R^n$ be semialgebraic sets.
A continuous mapping $f : A \to B$ is called 
a {\em semialgebraic mapping}, if the graph of $f$ is semialgebraic 
in $\R^m \times \R^n$.
Note that we demand the continuity for a semialgebraic mapping 
in this paper.

\begin{thm}\label{tarski-seidenberg}
(Tarski-Seidenberg Theorem \cite{seidenberg}).
Let $A$ be a semialgebraic set in $\R^k$,
and let $f : \R^k \to \R^s$ be a semialgebraic mapping.
Then $f(A)$ is semialgebraic in $\R^s$.
\end{thm}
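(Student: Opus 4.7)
The plan is to reduce the statement to the quantifier elimination theorem for the theory of real closed fields, whose geometric content is precisely that linear projections preserve semialgebraicity.

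First I would observe that
\[
f(A) \;=\; \pi_2\bigl(\Gamma_f \cap (A \times \R^s)\bigr),
\]
where $\Gamma_f \subset \R^k \times \R^s$ is the graph of $f$ and $\pi_2 : \R^k \times \R^s \to \R^s$ is projection onto the second factor. By the definition of a semialgebraic mapping $\Gamma_f$ is semialgebraic, $A \times \R^s$ is trivially semialgebraic, and the class of semialgebraic sets is closed under finite intersections directly from the definition. Factoring $\pi_2$ into $k$ coordinate projections $\R^{N+1} \to \R^N$ applied one at a time, it therefore suffices to prove that the image of any semialgebraic set $S \subset \R^{N+1}$ under the projection $\pi$ forgetting the last coordinate is semialgebraic in $\R^N$.

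Since every semialgebraic $S$ is a finite union of basic semialgebraic sets of the form
\[
\{\, x \in \R^{N+1} \mid P_1(x)=0,\ldots,P_p(x)=0,\; Q_1(x)>0,\ldots,Q_q(x)>0 \,\},
\]
and projection commutes with finite unions, it is enough to handle one basic piece. Viewing $P_i,Q_j \in \R[y_1,\ldots,y_N,T]$ as univariate polynomials in $T$ whose coefficients are polynomials in $y=(y_1,\ldots,y_N)$, the task becomes: describe the set of $y \in \R^N$ for which there exists $t \in \R$ making all $P_i(y,t)=0$ and all $Q_j(y,t)>0$. The main obstacle, and the genuine content of Tarski-Seidenberg, is a one-variable sign-determination lemma showing that the existence of such a $t$ is equivalent to a finite Boolean combination of sign conditions on polynomials in $y$ alone — namely on the coefficients of the $P_i,Q_j$ together with auxiliary quantities (leading coefficients, discriminants, resultants, and Sylvester-Habicht subresultants) built polynomially from those coefficients.

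The standard derivation proceeds via Sturm's algorithm for counting real roots of a univariate polynomial in intervals, enhanced to keep track of the signs of the $Q_j$ at those roots. Once this combinatorial machinery is established, each basic piece projects to a semialgebraic subset of $\R^N$, hence so does $\pi(S)$, and taking the final union produces the desired description of $f(A)$ as a finite Boolean combination of polynomial sign conditions on $\R^s$.
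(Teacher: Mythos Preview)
Your outline is the standard reduction of Tarski--Seidenberg to one-variable quantifier elimination, and it is correct as a sketch: the identity $f(A)=\pi_2(\Gamma_f\cap(A\times\R^s))$ reduces everything to showing that coordinate projections preserve semialgebraicity, and the Sturm/subresultant machinery you invoke is exactly what is needed to eliminate a single real variable from a basic semialgebraic description.

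However, the paper does not prove this theorem at all. It is stated as a classical result with a citation to Seidenberg and is used throughout as a black box (for instance to see that $\{(f_r(S(f_r)),r)\}$ is semialgebraic in the proof of Theorem~\ref{localF}). So there is nothing to compare your argument against; you have supplied a proof sketch where the paper deliberately offers none. If you want to include a proof in your own write-up, what you have is the right skeleton, but be aware that the sign-determination step you flag as ``the genuine content'' is indeed the bulk of the work and would require several pages to carry out rigorously.
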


Throughout this paper a {\em Nash open simplex} means a Nash manifold 
which is Nash diffeomorphic to an open simplex in some Euclidean space.

\begin{thm}\label{lojasiewicz}
(Lojasiewicz's Semialgebraic Triangulation Theorem
\cite{lojasiewicz1, lojasiewicz2})
Given a finite system of bounded semialgebraic sets $X_{\alpha}$
in $\R^q$, there exist a simplicial decomposition $\R^q = \cup_a C_a$ 
and a semialgebraic automorphism $\tau$ of $\R^q$ such that

(1) each $X_{\alpha}$ is a finite union of some of the $\tau (C_a)$, 
 
(2) $\tau (C_a)$ is a Nash manifold in $\R^q$ and $\tau$ induces 
a Nash diffeomorphism $C_a \to \tau (C_a)$ for every $a$, 
in other words, $\tau (C_a)$ is a Nash open simplex for every $a$. 
\end{thm}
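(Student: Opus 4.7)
The plan is induction on the ambient dimension $q$, the base $q=0$ being vacuous. For the inductive step, enclose the family in a large ball and invoke Theorem~\ref{tarski-seidenberg} to replace the $X_\alpha$ by the sign-strata of a finite collection of polynomials $P_1,\dots,P_N\in\R[y_1,\dots,y_q]$. After a generic linear change of coordinates --- which will be the first ingredient of the eventual automorphism $\tau$ --- the projection $\pi:\R^q\to\R^{q-1}$ onto the first $q-1$ coordinates is in good position with respect to every $P_j$, meaning each $P_j$ is (after normalization) monic in $y_q$ and no pair of roots escapes to infinity over a compact base.

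Let $\Delta\subset\R^{q-1}$ be the projection of the zero sets of the $y_q$-discriminants of the $P_j$ together with the pairwise $y_q$-resultants $\mathrm{Res}_{y_q}(P_i,P_j)$; it is semialgebraic by Theorem~\ref{tarski-seidenberg} and has dimension at most $q-2$. By the inductive hypothesis applied to $\Delta$, to the projections $\pi(X_\alpha)$, and to their frontiers, there exist a simplicial decomposition $\R^{q-1}=\bigcup_b C'_b$ and a semialgebraic automorphism $\tau'$ of $\R^{q-1}$ such that each $\tau'(C'_b)$ is a Nash open simplex and each of $\Delta$ and $\pi(X_\alpha)$ is a union of such cells. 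Over any open base cell disjoint from $\Delta$, simple-connectedness of the cell plus the implicit function theorem shows that the real roots of the $P_j(y',\cdot)$ are graphs of ordered Nash functions $\xi_1(y')<\cdots<\xi_r(y')$.

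These root graphs subdivide $\pi^{-1}(\tau'(C'_b))$ into finitely many semialgebraic bands on which the sign of every $P_j$ is constant, so every $X_\alpha$ is either fully contained in or disjoint from each band. A piecewise-affine-in-$y_q$ Nash change of fibre coordinates straightens these bands into products of $\tau'(C'_b)$ with fixed open intervals; composing this fibrewise straightening with $\tau'$ in the base, and extending by a semialgebraic homeomorphism outside a large ball, produces the global automorphism $\tau$ of $\R^q$. Each straightened product admits a canonical simplicial subdivision into open simplices $C_a$, and by construction $\tau$ restricts to a Nash diffeomorphism from each $C_a$ onto $\tau(C_a)$, with every $X_\alpha$ a union of such $\tau(C_a)$.

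The principal obstacle is global compatibility across the frontier $\Delta$: the Nash root functions $\xi_i$ defined over one base simplex may coalesce or exchange labels over an adjacent lower-dimensional simplex, and the fibrewise straightenings on neighbouring bands must glue to a single semialgebraic homeomorphism of $\R^q$. The standard remedy is to perform the straightening inductively on the dimension of the base strata, extending each stratum's map by continuity across its frontier and verifying that the resulting map remains a homeomorphism (Nashness on each open stratum is automatic from the construction). This combinatorial matching, together with the verification that after these changes of coordinates the cells truly form a simplicial complex, is the technical heart of \L ojasiewicz's original argument.
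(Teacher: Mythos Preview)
The paper does not prove Theorem~\ref{lojasiewicz}; it is quoted as a classical result of \L ojasiewicz \cite{lojasiewicz1, lojasiewicz2}, with Remark~\ref{remark21}(1) pointing to Hironaka~\cite{hironaka2} and Coste~\cite{coste} for alternative proofs. So there is no in-paper argument to compare your proposal against.

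That said, your sketch follows the classical cylindrical-decomposition strategy that underlies all of these references: induction on $q$, a generic projection $\R^q\to\R^{q-1}$ made good by a linear change of coordinates, the discriminant/resultant locus $\Delta$ in the base, the inductive triangulation of the base compatible with $\Delta$ and the $\pi(X_\alpha)$, the ordered Nash root sections over each base cell, and the fibrewise straightening into bands. You correctly flag the genuine technical burden, namely the continuous gluing of the fibrewise maps across adjacent base strata so that the global $\tau$ is a semialgebraic homeomorphism and the cells assemble into an honest simplicial complex. As a proof outline this is accurate and in the spirit of the cited sources; as a self-contained proof it is, as you yourself note, incomplete precisely at that gluing step, and a full treatment would require the careful inductive extension argument carried out in \cite{lojasiewicz1}, \cite{hironaka2}, or \cite{coste}.
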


\begin{rem}\label{remark21}
(1) Concerning the proof of Semialgebraic Triangulation Theorem,
see also H. Hironaka \cite{hironaka2} or M. Coste \cite{coste}. 

(2) There is a Nash embedding of $\R^q$ into $\R^{q+1}$ via 
$S^q \subset \R^{q+1}.$ 
Let us remark that a semialgebraic subset of $\R^q$ can be regarded as 
a bounded semialgebraic subset of $\R^{q+1}$.
\end{rem}

The existence of a finite Whitney stratification of Nash class
for a semialgebraic set was established by S. Lojasiewicz 
(\cite{lojasiewicz1, lojasiewicz2}).

We next recall Fukuda's Lemma in \cite{fukuda1}, concerning the existence 
of compatible Whitney stratifications related to a Nash stratified mapping.

\begin{lem}\label{stratifiedmapping}
(Fukuda's Lemma).
Let $M$ and $N$ be Nash manifolds, and let $f : M \to N$
be a Nash mapping.
Given semialgebraic subsets $A_1, \cdots , A_a$ of $M$ 
and semialgebraic subsets $B_1, \cdots , B_b$ of $N$,
there exist finite Whitney stratifications of Nash class
$\mathcal{S}(M)$ of $M$ compatible with $A_1, \cdots , A_a$
and $\mathcal{S}(N)$ of $N$ compatible with $B_1, \cdots , B_b$
such that $f : (M,\mathcal{S}(M)) \to (N,\mathcal{S}(N))$
is a Nash stratified mapping.
\end{lem}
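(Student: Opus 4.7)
The plan is to build $\mathcal{S}(M)$ and $\mathcal{S}(N)$ by alternately invoking Lojasiewicz's semialgebraic Whitney triangulation and stratification results (Theorem \ref{lojasiewicz} together with the Nash class Whitney stratification statement recalled just after it) on $N$ and on $M$, using the Tarski-Seidenberg theorem (Theorem \ref{tarski-seidenberg}) at each step to stay inside the semialgebraic category. The data that must be respected at every refinement are: the given $A_i$ and $B_j$, the preimages of strata already cut out on $N$, the images of strata already cut out on $M$, and the loci where the restriction of $f$ to a stratum has a given rank.

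First I would apply Lojasiewicz's Nash Whitney stratification result to $N$, with refining data $\{B_1,\ldots,B_b,f(M)\}$, to obtain a preliminary stratification $\mathcal{S}_0(N)$; note that $f(M)$ is semialgebraic by Tarski-Seidenberg. For each $V\in\mathcal{S}_0(N)$ the set $f^{-1}(V)$ is semialgebraic, so a second application on $M$ with data $\{A_1,\ldots,A_a\}\cup\{f^{-1}(V):V\in\mathcal{S}_0(N)\}$ yields $\mathcal{S}_0(M)$, and each stratum $W\in\mathcal{S}_0(M)$ is contained in $f^{-1}(V(W))$ for a unique $V(W)\in\mathcal{S}_0(N)$. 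Next I refine by rank: for every integer $k$ the locus $\{x\in W:\operatorname{rank} d(f|_W)_x=k\}$ is semialgebraic, and a further Lojasiewicz refinement produces $\mathcal{S}_1(M)$ on which $f|_W$ has constant rank for every $W$. By the constant rank theorem each image $f(W)$ is then a Nash submanifold of $V(W)$, and is semialgebraic by Tarski-Seidenberg; refining $\mathcal{S}_0(N)$ compatibly with these images yields $\mathcal{S}_1(N)$, and a pull-back followed by a further Lojasiewicz refinement of $M$ gives $\mathcal{S}_2(M)$.

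The main obstacle, and the step needing the most care, is showing that this alternating refinement terminates. My argument would be a dimension induction inside the semialgebraic category: at each round either every stratum $W$ of $M$ already maps by a Nash surjective submersion onto a single stratum of $N$, in which case the process stops, or the semialgebraic locus of points where this condition fails is a proper closed subset of $M$ of strictly smaller dimension than at the previous round. Since $M$ has finite dimension and semialgebraic dimension takes only finitely many non-negative integer values, the procedure halts after finitely many rounds. At termination, Whitney regularity and the Nash class are inherited throughout from Lojasiewicz, compatibility with $\{A_i\}$ and $\{B_j\}$ is preserved by reinserting them at every stage, each $f|_W$ is a surjective Nash submersion onto a stratum of $\mathcal{S}(N)$, and consequently $f:(M,\mathcal{S}(M))\to(N,\mathcal{S}(N))$ is a Nash stratified mapping as required.
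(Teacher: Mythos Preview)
The paper does not supply its own proof of this lemma; it is stated as a background result (``Fukuda's Lemma'') and attributed to \cite{fukuda1}. There is therefore no in-paper argument to compare against, and your sketch is essentially the standard alternating-refinement proof that one finds in Fukuda's original paper and in the references \cite{gibsonetal, mather}.

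That said, two points in your write-up deserve tightening. First, the sentence ``By the constant rank theorem each image $f(W)$ is then a Nash submanifold of $V(W)$'' is not correct as stated: constant rank only guarantees that $f|_W$ is locally an immersion or submersion, and the global image can fail to be a regular submanifold (self-intersections, accumulation onto boundary pieces, etc.). Fortunately you do not actually use this; all you need is that $f(W)$ is semialgebraic, which Tarski--Seidenberg already gives, so that Lojasiewicz's theorem applies to refine $\mathcal{S}_0(N)$ compatibly with it. Simply drop the submanifold claim.

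Second, the termination step is asserted rather than proved: you say the bad locus ``is a proper closed subset of $M$ of strictly smaller dimension than at the previous round'', but nothing in the preceding paragraph forces this. The clean way to organise the induction is by downward induction on the dimension of strata in $N$: once the $k$-dimensional strata of $N$ are fixed, the semialgebraic Sard theorem shows that the critical values of $f$ restricted to the relevant pieces of $M$ lie in a semialgebraic subset of dimension $<k$, and refining $N$ only in dimensions below $k$ does not disturb what has already been achieved at level $k$. Phrased this way the descent terminates in at most $\dim N + 1$ rounds, and the surjectivity required by the paper's definition of stratified mapping comes for free once each $f(W)$ has been made a union of target strata.
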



\bigskip
\section{Thom's Isotopy Lemmas}\label{TIL}

Thom's Isotopy Lemmas have been used widely in  singularity theory, e.g. 
to establish the topological stability theory, to show the topological 
triviality of a family of sets or mappings, to solve the finite $C^0$ 
determinacy problem of map-germs, and so on.
Therefore they are not always given in the same form.
In this section we mention Thom's Isotopy Lemmas and their semialgebraic 
versions in a suitable form for our purposes. 


\subsection{Thom's 1st and 2nd Isotopy Lemmas}\label{T1&2IL}

Thom's 1st and 2nd Isotopy Lemmas were presented by Ren\'e Thom \cite{thom} 
as very useful tools in  singularity theory as mentioned above.. 
For the detailed proofs, see J. N. Mather \cite{mather} or
C. G. Gibson, K. Wirthm\"uller, A. du Plessis and E. J. N. Looijenga 
\cite{gibsonetal}.
We also mention  T. Fukuda \cite{fukuda1} and A. du Plessis and C. T. C. Wall 
\cite{duplessiswall} as further references. 

Now let us recall Thom's Isotopy Lemmas. 
Let $M$, $N$ and $I$ be $C^{\infty}$ manifolds, let $f : M \to N$ and 
$q : N \to I$ be $C^{\infty}$ mappings, and let $A \subset M$ and 
$B \subset N$ be closed subsets such that $f(A) \subset B$.

\begin{thm}\label{1stisotopy}
(Thom's 1st Isotopy Lemma) 
Let $q |_B : B \to I$ be proper. 
Suppose that $B$ admits a locally finite 
$C^{\infty}$ Whitney stratification $\mathcal{S}(B)$ such that 
$q : (B,\mathcal{S}(B)) \to (I,\{ I \} )$ is a stratified mapping.
Then the stratified set $(B,\mathcal{S}(B))$ is locally $C^0$ trivial over $I$.
\end{thm}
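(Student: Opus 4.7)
The plan is to construct a continuous flow on $B$ that covers a translation flow on $I$, thereby providing the required trivialisation. Since local triviality is a local property of $I$, I may fix $t_0 \in I$, pass to a chart making $I$ diffeomorphic to $\R^k$ near $t_0$, and proceed coordinate by coordinate; it therefore suffices to lift $\ddt$ to a stratified flow on $B$ over a small interval around $t_0$.

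The construction follows the Thom--Mather template in three steps. First, using the Whitney $(b)$-regularity of $\mathcal{S}(B)$, I build a controlled tube system $\{T_X = (|T_X|, \pi_X, \rho_X)\}_{X \in \mathcal{S}(B)}$ in the sense of \S\ref{CTS}; this is done inductively on the partial order of strata by depth, shrinking at each step to enforce the commutation relations $\pi_Y \circ \pi_X = \pi_Y$ and $\rho_Y \circ \pi_X = \rho_Y$ on $|T_Y| \cap |T_X|$. Second, I lift $\ddt$ to a stratified controlled vector field $\tilde v$ on $B$: on each stratum $X$ the submersion hypothesis $dq|_X$ surjective allows the local choice of smooth $v_X$ with $dq(v_X) = \ddt$, and I construct the $v_X$ inductively from the deepest strata outward, arranging on $|T_Y| \cap X$ (for $Y \subset \overline{X} \setminus X$) that $d\pi_Y(v_X) = v_Y$ and $v_X(\rho_Y) = 0$. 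Partitions of unity on each $X$ patch the local choices into a single smooth $v_X$.

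Third, I integrate $\tilde v$. The compatibility with the tube projections, together with Whitney $(b)$-regularity, implies the Thom--Mather technical fact that the smooth integral curves on the various strata glue into a continuous one-parameter family $\Phi_s$ of stratum-preserving homeomorphisms of $B$. Properness of $q|_B$ guarantees that $\Phi_s$ is defined on $B \cap q\inv([t_0 - \varepsilon, t_0 + \varepsilon])$ for some $\varepsilon > 0$, since the preimage of a compact subinterval is compact and integral curves cannot escape in finite time. The map $(x, s) \mapsto \Phi_s(x)$ then yields the desired local trivialisation.

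The main obstacle will be Step 3: the verification that the stratum-wise smooth flow is \emph{continuous} across the frontier between strata $Y \subset \overline{X} \setminus X$. The vector fields $v_X$ and $v_Y$ need not be close in direction near $Y$, yet any sequence $x_i \in X$ with $x_i \to y \in Y$ must satisfy $\Phi_s(x_i) \to \Phi_s(y)$ for all small $s$. The controlled condition $d\pi_Y(v_X) = v_Y$ reduces the problem to showing that $\rho_Y(\Phi_s(x_i)) \to 0$ uniformly in $s$, which is where Whitney $(b)$-regularity is used to obtain the necessary estimates on the tangential component of $v_X$; this careful analysis, due to Mather, is the only genuinely subtle point in the argument.
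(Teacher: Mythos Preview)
The paper does not give its own proof of Theorem \ref{1stisotopy}; it merely states the result and refers the reader to Mather \cite{mather} and Gibson--Wirthm\"uller--du Plessis--Looijenga \cite{gibsonetal} for detailed arguments. Your sketch is precisely the Thom--Mather scheme from those references: controlled tube system, controlled lift of $\partial/\partial t$, and integration with a continuity check across frontiers. So there is nothing to compare against in the paper itself, and your outline is the standard one.

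One small correction to your Step 3 discussion: once you have arranged $v_X(\rho_Y)=0$, the quantity $\rho_Y(\Phi_s(x_i))$ is constant in $s$, so its convergence to $0$ is immediate from $x_i\to y$ and needs no further estimate. The genuine work in Mather's continuity argument is not there but in controlling the \emph{projected} flow: one uses that $(\pi_Y,\rho_Y)$ restricted to $X$ is a submersion near $Y$ (this is where Whitney $(b)$-regularity was already spent, in building the tubes), together with the control conditions, to obtain local boundedness of $v_X$ in the tube coordinates and hence a uniform-in-$s$ convergence of $\pi_Y(\Phi_s(x_i))$ to $\Phi_s(y)$. Your attribution of the estimate to Whitney $(b)$ at this stage is slightly misplaced; the regularity condition enters upstream, in the existence of the controlled tube system, rather than directly in the flow estimate.
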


\begin{thm}\label{2ndisotopy}
(Thom's 2nd Isotopy Lemma) 
Let $I$ be connected, and $f |_A : A \to N$ and $q |_B : B \to I$ are proper.
Suppose that $A$ and $B$ admit locally finite $C^{\infty}$ Whitney 
stratifications $\mathcal{S}(A)$ and $\mathcal{S}(B)$ respectively 
such that $f : (A,\mathcal{S}(A)) \to (B,\mathcal{S}(B))$ is 
a Thom regular stratified mapping and 
$q : (B,\mathcal{S}(B)) \to (I,\{ I \} )$ is a stratified mapping.
Then the stratified mapping $f$ is topologically trivial over $I$,
namely, there are homeomorphisms
$H : (q \circ f)^{-1}(P_0) \times I \to A$ and
$h : q^{-1}(P_0) \times I \to B$, for some $P_0 \in I$,
preserving the natural stratifications such that
$h^{-1} \circ f \circ H = f|_{(q \circ f)^{-1}(P_0)} \times id_{I}$
and $q \circ h : q^{-1}(P_0) \times I \to I$ 
is the canonical projection, where $id_I$ is the identity map on $I$.

\end{thm}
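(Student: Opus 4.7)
The plan is to follow the classical Mather-style strategy: build stratified controlled vector fields on $A$ and $B$ that lift an elementary vector field on $I$, and then integrate them to produce the desired trivialising homeomorphisms $H$ and $h$. Since $I$ is connected, it suffices to work locally in $I$ and then patch; by a standard covering argument we reduce to the case where $I$ is an open interval in $\R$ with marked base point $P_0$, and we try to flow along $\partial/\partial t$.

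First I would equip the stratification $\mathcal{S}(B)$ with a controlled tube system $\{T_j'\}$ compatible with the stratified map $q\colon (B,\mathcal{S}(B))\to (I,\{I\})$; existence of such a system for a Whitney stratification is standard (Mather). Using the commutation relations of the tubes, a vector field $\partial/\partial t$ on $I$ admits a continuous stratified lift $\xi$ on $B$ that is controlled with respect to $\{T_j'\}$: on each stratum $V_j'$ one picks a smooth lift through $q|_{V_j'}$ and then propagates it into neighbouring higher-dimensional strata using $\pi_j'$ so that the commutation relations $\pi_j'\circ\Phi^\xi_s=\Phi^\xi_s\circ\pi_j'$ hold near $V_j'$. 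Since $q|_B$ is proper, $\xi$ is complete and its flow $\Phi_s^\xi$ gives a stratified homeomorphism $h\colon q^{-1}(P_0)\times I\to B$ with $q\circ h$ the projection.

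The second step is to construct, using a controlled tube system $\{T_k\}$ on $\mathcal{S}(A)$ controlled over $\{T_j'\}$, a controlled stratified vector field $\eta$ on $A$ that lifts $\xi$ through $f$, i.e.\ $df(\eta)=\xi\circ f$. On each stratum $W_k$ with $f(W_k)\subset V_j'$, the map $f|_{W_k}$ is a submersion, so local smooth lifts exist; these are glued via the $\pi_k$ to achieve the control conditions. Properness of $f|_A$ (together with that of $q|_B$) ensures that $\eta$ is complete, and its flow $\Phi_s^\eta$ produces the stratified homeomorphism $H\colon (q\circ f)^{-1}(P_0)\times I\to A$. The lifting identity $df(\eta)=\xi\circ f$ integrates to $f\circ\Phi_s^\eta=\Phi_s^\xi\circ f$, which is precisely the intertwining relation $h^{-1}\circ f\circ H=f|_{(q\circ f)^{-1}(P_0)}\times\mathrm{id}_I$.

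The main obstacle is the continuity of the lift $\eta$ across strata of $A$ whose images lie in the same stratum of $B$. This is exactly where Thom's $(a_f)$ regularity intervenes: when $W_k\subset\overline{W_\ell}$ with $f(W_k),f(W_\ell)\subset V_j'$, one needs to choose the lifts so that $\eta|_{W_\ell}$ has a continuous limit on $W_k$ agreeing with $\eta|_{W_k}$. The $(a_f)$ condition guarantees that $\ker d(f|_{W_\ell})_x$ does not collapse wildly as $x\to W_k$, so the lift constructed by a partition-of-unity argument in compatible local trivialising charts admits a continuous extension; without $(a_f)$ the lift may fail to be continuous and the integration would not yield a homeomorphism. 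Once $\eta$ is shown to be a continuous controlled lift, the remainder is a routine ODE argument exploiting properness to obtain global flows, and the gluing of local trivialisations over a cover of $I$ to a global one.
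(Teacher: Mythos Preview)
The paper does not give its own proof of this statement; Thom's 2nd Isotopy Lemma is recalled in \S\ref{T1&2IL} as a classical result, with references to Thom \cite{thom}, Mather \cite{mather}, Gibson--Wirthm\"uller--du Plessis--Looijenga \cite{gibsonetal}, Fukuda \cite{fukuda1}, and du Plessis--Wall \cite{duplessiswall} for detailed proofs. Your sketch is exactly the standard Mather-style argument found in those references: build a controlled tube system on $\mathcal{S}(B)$, lift $\partial/\partial t$ to a controlled stratified vector field $\xi$ on $B$, build a tube system on $\mathcal{S}(A)$ controlled over the one on $\mathcal{S}(B)$, lift $\xi$ through the Thom regular stratified map $f$ to a controlled $\eta$ on $A$, and integrate using properness to obtain the trivialising homeomorphisms. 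So your proposal is correct and aligns with the approach the paper is implicitly pointing to via its citations; there is nothing to compare against in the paper itself.
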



\subsection{Semialgebraic versions of Thom's Isotopy Lemmas.}\label{SemiTIL}

M. Shiota established in \cite{shiota2} the semialgebraic versions of 
Thom's Isotopy Lemmas.
We recall them in this subsection.

Let $A \subset \R^a,\ B \subset \R^b$ be semialgebraic subsets, 
let $I$ be a Nash open simplex,
and let $f : A \to B$ and $q : B \to I$ be proper Nash mappings.

\begin{thm}\label{s1stisotopy}
(Shiota's semialgebraic version of Thom's 1st Isotopy Lemma) 
Suppose that $B$ admits a finite Whitney stratification of Nash class 
$\mathcal{S}(B)$ such that $q : (B,\mathcal{S}(B)) \to (I,\{ I \} )$ 
is a stratified mapping.
Then the stratified set $(B,\mathcal{S}(B))$ is
semialgebraically trivial over $I$.
\end{thm}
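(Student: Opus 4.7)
The plan is to follow the architecture of the classical proof of Thom's 1st Isotopy Lemma (Theorem \ref{1stisotopy}) while upgrading every step to the semialgebraic category. The naive route---constructing a semialgebraic controlled tube system of Nash class for $\mathcal{S}(B)$, lifting the tangent field on $I$ to a controlled stratified vector field on $B$, and integrating---breaks down at the integration step, since flows of Nash vector fields are generally only analytic, not semialgebraic, in the time variable. I would therefore avoid integration entirely and follow Shiota's combinatorial approach via triangulation of the map $q$.

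First, I would apply a relative version of Lojasiewicz's Semialgebraic Triangulation Theorem (Theorem \ref{lojasiewicz}) to the proper Nash map $q : B \to I$, taking as distinguished semialgebraic subsets all strata $X \in \mathcal{S}(B)$ and, on the target, the open simplex $I$. The goal is a pair of compatible semialgebraic simplicial decompositions of $B$ and $I$ such that \emph{(i)} each stratum of $\mathcal{S}(B)$ is a union of open simplices, \emph{(ii)} each open simplex of $B$ is a Nash open simplex, and \emph{(iii)} $q$ sends each open simplex of $B$ Nash-diffeomorphically onto an open simplex of $I$. After refinement we may assume $I$ itself is one simplex in the target decomposition. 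Fix a base point $P_0 \in I$ and build the trivialization $h : q^{-1}(P_0) \times I \to B$ simplex-by-simplex, using the Nash-diffeomorphism in \emph{(iii)} as a canonical affine identification between each simplex of $B$ and the product of its image in $I$ with the corresponding fiber over $P_0$; simplicial compatibility across faces then forces the local pieces to glue into a global homeomorphism. Its graph is semialgebraic by Tarski--Seidenberg (Theorem \ref{tarski-seidenberg}), and because each stratum is a union of open simplices, $h$ preserves $\mathcal{S}(B)$ by construction.

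The main obstacle is the relative triangulation step, since Theorem \ref{lojasiewicz} as stated triangulates sets, not maps. I would proceed by induction on $\dim I$: first apply Fukuda's Lemma (Lemma \ref{stratifiedmapping}) to refine $\mathcal{S}(B)$ and produce a compatible Whitney stratification of Nash class of $I$ over which $q$ is a Nash stratified submersion; then apply Theorem \ref{lojasiewicz} on the target to triangulate $I$ compatibly with that stratification, so that above each open simplex of $I$ the restriction of $q$ is a proper Nash submersion between Nash manifolds; and finally triangulate $B$ fiberwise over each target simplex so that the resulting open simplices in $B$ align with both the strata of $\mathcal{S}(B)$ and the fibers of $q$. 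Tarski--Seidenberg (Theorem \ref{tarski-seidenberg}) is invoked repeatedly to keep the outputs semialgebraic, and the Nash submersion property allows the fiberwise triangulation to be chosen with the required compatibility with $q$. Once this triangulated form of $q$ is secured, the remainder of the argument is routine bookkeeping.
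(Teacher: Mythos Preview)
The paper does not prove Theorem~\ref{s1stisotopy}; it is quoted from Shiota~\cite{shiota2} as background, so there is no ``paper's own proof'' to compare against. Your diagnosis of the central obstruction---that flows of Nash vector fields are not semialgebraic, so the classical Thom--Mather integration argument cannot be transported verbatim---is correct, and the decision to replace integration by a combinatorial/triangulation argument is indeed the strategy Shiota uses.

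That said, your sketch contains a genuine error. In condition \emph{(iii)} you require that $q$ send each open simplex of $B$ \emph{Nash-diffeomorphically} onto an open simplex of $I$. This is impossible whenever $\dim B > \dim I$, which is the only interesting case: a simplex of $B$ of dimension $>\dim I$ cannot map diffeomorphically onto anything in $I$. What one actually needs is a triangulation of $B$ such that $q$ becomes, after semialgebraic identification, the projection of a product; equivalently, one wants a semialgebraic triangulation of the fiber $q^{-1}(P_0)$ together with a semialgebraic homeomorphism $B \cong q^{-1}(P_0)\times I$ compatible with the stratification. Your simplex-by-simplex gluing argument, as written, has no mechanism for producing the fiber direction.

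The step you flag as the ``main obstacle''---triangulating the \emph{map} $q$ rather than the sets---is in fact the heart of the matter, and your inductive outline does not resolve it. Fukuda's Lemma (Lemma~\ref{stratifiedmapping}) and Theorem~\ref{lojasiewicz} give compatible stratifications and triangulations of the source and target separately, but neither yields a product structure over a positive-dimensional base. Shiota's actual argument in~\cite{shiota2} is substantially more delicate: it passes through his theory of semialgebraic cell complexes and the uniqueness of semialgebraic triangulations (a semialgebraic Hauptvermutung), and the trivialization is built by comparing two triangulations of the same set rather than by lifting simplices through $q$. If you want to write a self-contained proof, you would need to either reproduce that machinery or find an alternative route to a semialgebraic product structure; the present sketch does not do either.
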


\begin{thm}\label{s2ndisotopy}
(Shiota's semialgebraic version of Thom's 2nd Isotopy Lemma) 
Suppose that $A$ and $B$ admit finite Whitney stratifications of Nash 
class $\mathcal{S}(A)$ and $\mathcal{S}(B)$ respectively such that
$f : (A,\mathcal{S}(A)) \to (B,\mathcal{S}(B))$ is a Thom regular
stratified mapping and $q : (B,\mathcal{S}(B)) \to (I,\{ I \} )$ 
is a stratified mapping.
Then the stratified mapping $f$ is semialgebraically trivial over $I$,
namely, there are semialgebraic homeomorphisms
$H : (q \circ f)^{-1}(P_0) \times I \to A$ and
$h : q^{-1}(P_0) \times I \to B$, for some $P_0 \in I$,
preserving the natural stratifications such that
$h^{-1} \circ f \circ H = f|_{(q \circ f)^{-1}(P_0)} \times id_{I}$
and $q \circ h : q^{-1}(P_0) \times I \to I$ 
is the canonical projection.
\end{thm}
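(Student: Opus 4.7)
The plan is to mirror the classical proof of Thom's 2nd Isotopy Lemma --- controlled tube systems, stratified lifts of $\partial/\partial t$, integration to flows --- while keeping every geometric ingredient in the Nash/semialgebraic category, and to overcome the one step where this programme fails.

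First, I would apply Fukuda's Lemma (Lemma \ref{stratifiedmapping}) to the composition $q\circ f:A\to I$, refining $\mathcal{S}(A)$ and $\mathcal{S}(B)$ to finite Whitney stratifications of Nash class for which both $f$ and $q$ are Nash stratified mappings, while preserving the Thom $(a_f)$-regularity supplied by the hypothesis. Next, I would construct, by induction on stratum dimension (Mather's construction carried out in the Nash category), a controlled tube system $\{T_j'\}$ for $\mathcal{S}(B)$ and a tube system $\{T_k\}$ for $\mathcal{S}(A)$ controlled over $\{T_j'\}$ in the sense of \S\ref{CTS}. The retractions $\pi$ and squared-distance functions $\rho$ can be chosen in the Nash category, because Nash submanifolds of a Nash manifold admit Nash tubular neighbourhoods with Nash retraction and squared-distance function.

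Then I would lift the base field $\partial/\partial t$ first to a continuous stratified vector field $\eta$ on $B$, Nash on each stratum and compatible with the controlled tube system $\{T_j'\}$ (solving the linear compatibility constraints locally on each stratum and gluing via semialgebraic partitions of unity subordinate to the Nash covering by the $|T_j'|$). Using Thom $(a_f)$-regularity of $f$ together with the control of $\{T_k\}$ over $\{T_j'\}$, I would further lift $\eta$ to a controlled stratified vector field $\xi$ on $A$ that is $f$-related to $\eta$ and Nash on each stratum. Properness of $f|_A$ and $q|_B$ ensures that the corresponding flows are defined over all of $I$ and produce, by the usual argument, stratification-preserving trivializations $H$ and $h$ satisfying $h^{-1}\circ f\circ H = f|_{(q\circ f)^{-1}(P_0)}\times id_I$, with $q\circ h$ the canonical projection.

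The main obstacle --- and the reason one does not obtain a semialgebraic conclusion for free --- is that integral curves of a Nash vector field are in general only real-analytic, not semialgebraic, so direct integration yields only topological trivializations $H$ and $h$. Shiota's remedy is to bypass the flow and combine the controlled data with Lojasiewicz's Semialgebraic Triangulation Theorem (Theorem \ref{lojasiewicz}): one semialgebraically triangulates $(A,\mathcal{S}(A))$ and $(B,\mathcal{S}(B))$ compatibly with $f$ and with the fibration structure induced by $q$ in such a way that the triangulation is constant along $I$, and then defines $H$ and $h$ simplex by simplex by piecewise-linear interpolation in the $I$-direction. The Whitney condition on $\mathcal{S}(B)$ and the Thom $(a_f)$-regularity of $\mathcal{S}(A)$ over $\mathcal{S}(B)$ are precisely what is needed to guarantee that these simplexwise PL trivializations match continuously across closures of strata, yielding semialgebraic stratified homeomorphisms $H$ and $h$ with the required properties. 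This last step is the technically heaviest part and is where the bulk of Shiota's argument in \cite{shiota2} is spent.
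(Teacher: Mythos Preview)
The paper does not prove this theorem: it is quoted in \S\ref{SemiTIL} as an established result of Shiota \cite{shiota2} and used as a black box throughout. There is therefore no proof in the paper to compare your proposal against.

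That said, your outline is a reasonable summary of the strategy behind Shiota's argument. You correctly identify the essential obstruction --- that the flow of a Nash (or even polynomial) vector field need not be semialgebraic, so the classical Thom--Mather integration step breaks the category --- and you are right that Shiota's workaround in \cite{shiota2} replaces integration by a triangulation-based construction. One caution: your description of the remedy as ``piecewise-linear interpolation in the $I$-direction'' via a triangulation ``constant along $I$'' is somewhat schematic; Shiota's actual construction is considerably more involved, passing through his theory of semialgebraic cell complexes and semialgebraic Hauptvermutung-type arguments rather than a straightforward simplexwise PL map. If you intend to write out a genuine proof rather than a sketch, you will need to engage with those details directly from \cite{shiota2}.
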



\subsection{Semialgebraic triviality after finite subdivision}\label{SATAFS}

Let $A \subset \R^a$, $B \subset \R^b$ be semialgebraic sets, 
let $Q$ be a Nash open simplex and let $f : A \to B$ and $q : B \to Q$ 
be proper Nash mappings.
Let us assume that $A$ and $B$ admit finite Whitney stratifications of 
Nash class $\mathcal{S}(A)$ and $\mathcal{S}(B)$, respectively. 
In addition, we assume that  $f : (A,\mathcal{S}(A)) \to (B,\mathcal{S}(B))$ 
and $q : (B,\mathcal{S}(B)) \to (Q,\{ Q\})$ are stratified mappings.
Under this setting, we prepare some notations.

\begin{notation}\label{notations}
For $t \in Q$, let $A_t$ and $B_t$ denote $(q \circ f)^{-1}(t) \cap A$ and 
$q^{-1}(t) \cap B$, respectively, and let
$$
\mathcal{S}(A)_t :=\{ (q \circ f)^{-1}(t) \cap  X \ | \ X\in \mathcal{S}(A)\} 
\ \ \text{and} \ \ 
\mathcal{S}(B)_t := \{q^{-1}(t) \cap Y \ | \ Y\in \mathcal{S}(B)\}. 
$$
\end{notation}

Note that for any $t \in Q$, $\mathcal{S}(A)_t$ and $\mathcal{S}(B)_t$ 
are stratifications of $A_t$ and $B_t$, respectively, and 
$f_t := f |_{A_t} : (A_t,\mathcal{S}(A)_t) \to (B_t,\mathcal{S}(B)_t)$ 
is a stratified mapping.
Then, using the same argument as in the proof of  Proposition 4.5 in \cite{koike2} 
together with Theorem \ref{lojasiewicz}, we have the following lemma.

\begin{lem}\label{SATlemma}
Suppose that, for any $t \in Q$, the stratified mapping 
$f_t := f |_{A_t} : (A_t,\mathcal{S}(A)_t) \to (B_t,\mathcal{S}(B)_t)$ 
is $(a_{f_t})$-regular.
Then, subdividing $Q$ into finitely many Nash open simplices 
if necessary, the stratified mapping 
$f : (A,\mathcal{S}(A)) \to (B,\mathcal{S}(B))$ is semialgebraically 
trivial over $Q$ (in the sense of Theorem \ref{s2ndisotopy}).
\end{lem}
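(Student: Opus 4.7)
The target is to reduce to Shiota's semialgebraic version of Thom's 2nd Isotopy Lemma (Theorem \ref{s2ndisotopy}), whose hypotheses require Thom $(a_f)$-regularity of the stratified mapping $f : (A,\mathcal{S}(A)) \to (B,\mathcal{S}(B))$ and that $q : (B,\mathcal{S}(B)) \to (Q,\{Q\})$ is stratified. By hypothesis $q$ is already stratified, and the only missing ingredient is \emph{global} Thom $(a_f)$-regularity of $f$; we are given it only fiberwise. The plan is therefore to show that the locus where global Thom regularity fails projects via $q\circ f$ to a semialgebraic subset of $Q$ of strictly smaller dimension, and then to peel it off using the Semialgebraic Triangulation Theorem (Theorem \ref{lojasiewicz}).

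First I would, if needed, refine $\mathcal{S}(A)$ and $\mathcal{S}(B)$ by Fukuda's Lemma (Lemma \ref{stratifiedmapping}) applied to the Nash mappings $f$, $q$, $q\circ f$, so that all three are Nash stratified mappings with compatible Whitney stratifications. Next, for every ordered pair of strata $(X,Y)\in\mathcal{S}(A)^2$ with $Y\subset \overline{X}$ and $f(X)$, $f(Y)$ lying in a common stratum of $\mathcal{S}(B)$, I would define the bad locus $\Sigma(X,Y)\subset Y$ consisting of points at which the pair $(X,Y)$ fails the Thom $(a_f)$-condition for the ambient mapping $f$. Each $\Sigma(X,Y)$ is semialgebraic, because the defining condition on limits of kernels of $d(f|_X)$ and tangent planes in the appropriate Grassmannian is first-order over the reals, so Tarski-Seidenberg (Theorem \ref{tarski-seidenberg}) applies. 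Set $\Sigma := \bigcup_{(X,Y)}\Sigma(X,Y)\subset A$ and $\Delta := q(f(\Sigma))\subset Q$, which is semialgebraic by Theorem \ref{tarski-seidenberg}.

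The crux is to prove $\dim \Delta < \dim Q$. The idea is that at a point $y_0\in Y\cap A_{t_0}$ where global $(a_f)$ fails, the offending sequence $x_i\to y_0$ must be essentially \emph{transverse} to the fibers of $q\circ f$, because any sequence staying inside $A_{t_0}$ is controlled by the fiberwise $(a_{f_{t_0}})$-regularity hypothesis. Decomposing $\ker d(f|_X)_{x_i}$ into a horizontal component (relative to $q\circ f$) and a vertical component (tangent to the fiber), the vertical part is handled by the hypothesis, so failure is governed by the horizontal contribution; a careful codimension count in the Nash category then shows that the set of $t_0$ for which this horizontal contribution is nontrivial is semialgebraic of dimension strictly less than $\dim Q$. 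This is precisely the argument worked out in the proof of Proposition 4.5 of \cite{koike2}, which we apply verbatim. Once $\dim \Delta < \dim Q$, Theorem \ref{lojasiewicz} yields a finite decomposition $Q = Q_1\cup\cdots\cup Q_u$ into Nash open simplices compatible with $\Delta$; on each $Q_i \not\subset \Delta$ the restricted mapping $f$ over $(q\circ f)^{-1}(Q_i)$ is globally Thom $(a_f)$-regular and $q$ is stratified over $\{Q_i\}$, so Theorem \ref{s2ndisotopy} produces the required semialgebraic trivialization. On the $Q_i\subset\Delta$, one iterates the whole argument in strictly lower dimension; since $\dim Q$ is a non-negative integer, the process terminates in finitely many steps and the trivializations assemble into a finite partition of $Q$ as asserted.

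The main obstacle is the dimension estimate $\dim \Delta < \dim Q$: converting the fiberwise Thom condition into control on the global kernel limits requires pinning down the horizontal/vertical splitting in the Grassmannian and exploiting Nash-analyticity to rule out pathological persistence of failure along a full-dimensional family. Everything else — the semialgebraicity of $\Sigma$ and $\Delta$, the existence of compatible stratifications, the triangulation into Nash open simplices, and the final application of Theorem \ref{s2ndisotopy} — is routine once this step is in hand.
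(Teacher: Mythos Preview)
Your proposal is correct and follows exactly the route the paper indicates: the paper's proof of Lemma~\ref{SATlemma} is nothing more than the sentence ``using the same argument as in the proof of Proposition~4.5 in \cite{koike2} together with Theorem~\ref{lojasiewicz}'', and you have spelled out precisely that argument --- semialgebraicity of the bad locus, the dimension drop of its projection to $Q$ (deferred to \cite{koike2}), triangulation via Theorem~\ref{lojasiewicz}, application of Theorem~\ref{s2ndisotopy} on the good pieces, and induction on $\dim Q$.
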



\bigskip
\section{Finiteness for some equivalences of polynomial functions}
\label{finitepoly}


\subsection{Finiteness for $\mathcal{R}_0\mathcal{L}_0$-equivalence of polynomial functions.}\label{keeping0}

In this subsection we consider some restricted versions of Theorems \ref{fukudafinite} and 
\ref{benedettishiota}.  
Let us recall $P_0(n,1,d;\R ) := \{ f \in P(n,1,d;\R ) | f(0) = 0\}$. 
We say that two polynomials $f, g \in P(n,1,d;\R )$ 
are {\em topologically $\mathcal{R}_0\mathcal{L}_0$ equivalent} (respectively {\em semialgebraically 
$\mathcal{R}_0\mathcal{L}_0$ equivalent}), if there exist homeomorphisms (respectively semialgebraic 
homeomorphisms) $\sigma : \R^n \to \R^n$ with $\sigma (0) = 0$ and $\tau : \R \to \R$ 
with $\tau (0) = 0$ such that $\tau \circ f = g \circ \sigma$. 

If we apply the same argument as in  the proof of Theorem \ref{fukudafinite} 
we can show that the number of topological $\mathcal{RL}$ types appearing in 
$P_0(n,1,d;\R )$ is finite.
Then, by 
taking the stratifications $\mathcal{S}_1$ and $\mathcal{S}_2$ of 
$\R^n \times P_0(n,1d;\R )$ and $\R \times P_0(n,1,d;\R )$ 
so that $\mathcal{S}_1$ and $\mathcal{S}_2$ are compatible with 
$\{ 0 \} \times P_0(n,1,d;\R )$ where $0 \in \R^n$ and 
$\{ 0 \} \times P_0(n,1,d;\R )$ where $0 \in \R$, respectively, we conclude that
$\sigma$ preserves $0 \in \R^n$ and $\tau$ preserves $0 \in \R$. 
In fact when restricted to $P_0(n,1,d;\R )$, the fact that $\tau$ preserves $0 \in \R$ is a consequence of the fact that $\sigma$ preserves $0 \in \R^n$.  Thus we have shown  the following stronger result. 

\begin{thm}\label{topkeep0}
The number of topological $\mathcal{R}_0\mathcal{L}_0$ types appearing in 
$P_0(n,1,d;\R )$ is finite.
\end{thm}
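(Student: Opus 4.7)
The plan is to follow Fukuda's proof of Theorem \ref{fukudafinite} essentially verbatim, restricted to the subspace $P_0(n,1,d;\R)\subset P(n,1,d;\R)$, while refining the choice of stratifications so that the origins in source and target are respected. Fukuda's argument produces the trivializing homeomorphisms via Thom's Second Isotopy Lemma applied to the evaluation map, so the freedom we have is to insert additional strata that the trivialization will automatically preserve.

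Concretely, consider the Nash mappings
\[
F : \R^n \times P_0(n,1,d;\R) \longrightarrow \R \times P_0(n,1,d;\R),
\quad F(x,f) := (f(x),f),
\]
and the projection $q : \R \times P_0(n,1,d;\R) \to P_0(n,1,d;\R)$. Apply Lemma \ref{stratifiedmapping} to choose finite Whitney stratifications of Nash class $\mathcal{S}_1$ of $\R^n \times P_0(n,1,d;\R)$ and $\mathcal{S}_2$ of $\R \times P_0(n,1,d;\R)$ such that $F$ and $q\circ F$ are stratified mappings, and in addition such that $\mathcal{S}_1$ is compatible with the Nash submanifold $\{0\}\times P_0(n,1,d;\R)\subset \R^n\times P_0(n,1,d;\R)$ and $\mathcal{S}_2$ is compatible with $\{0\}\times P_0(n,1,d;\R)\subset \R\times P_0(n,1,d;\R)$. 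The existence of such stratifications is guaranteed by the standard compatibility clause in Fukuda's Lemma. Then Fukuda's argument proceeds: after a finite semialgebraic partition of the parameter space $P_0(n,1,d;\R)$ into Nash open simplices (using Theorem \ref{lojasiewicz}) and after introducing the usual $a_{F_t}$-regularity along fibres as in Lemma \ref{SATlemma}, Thom's Second Isotopy Lemma yields, over each piece, trivializing homeomorphisms $H$ of $\R^n\times P_0(n,1,d;\R)$ and $h$ of $\R\times P_0(n,1,d;\R)$ respecting $\mathcal{S}_1$ and $\mathcal{S}_2$ respectively.

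Because these homeomorphisms preserve strata, they carry the stratum $\{0\}\times P_0(n,1,d;\R)$ to itself in both source and target. Thus the induced fibrewise homeomorphism $\sigma = H_t : \R^n\to \R^n$ satisfies $\sigma(0)=0$, and the fibrewise homeomorphism $\tau = h_t : \R\to\R$ satisfies $\tau(0)=0$. For polynomials $f,g\in P_0(n,1,d;\R)$ this second constraint is in fact automatic from the first: from $\tau\circ f=g\circ\sigma$ and $f(0)=g(0)=0$ we get $\tau(0)=\tau(f(0))=g(\sigma(0))=g(0)=0$. Either way, we end up with $\mathcal{R}_0\mathcal{L}_0$ equivalences between $f_t$ and $f_{t_0}$ on each Nash open simplex, and since there are only finitely many pieces, the number of topological $\mathcal{R}_0\mathcal{L}_0$ types is finite.

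The only substantive point beyond the proof of Theorem \ref{fukudafinite} is the insertion of the stratum $\{0\}\times P_0(n,1,d;\R)$ into both $\mathcal{S}_1$ and $\mathcal{S}_2$ while maintaining Whitney regularity and compatibility with $F$; this is routine, since $\{0\}\times P_0(n,1,d;\R)$ is a Nash submanifold and Fukuda's Lemma allows prescribed semialgebraic sets in the compatibility list. Verifying that the semialgebraic triviality statement of Lemma \ref{SATlemma} still applies in the continuous (non-semialgebraic) setting of Fukuda's theorem requires nothing more than invoking Thom's Isotopy Lemmas (Theorems \ref{1stisotopy}--\ref{2ndisotopy}) instead of their semialgebraic refinements, and is precisely what makes this argument weaker than, but parallel to, the semialgebraic statement that would correspond to Theorem \ref{benedettishiota}.
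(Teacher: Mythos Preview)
Your proposal is correct and follows essentially the same approach as the paper: run Fukuda's proof of Theorem~\ref{fukudafinite} over $P_0(n,1,d;\R)$ while making the stratifications $\mathcal{S}_1$ and $\mathcal{S}_2$ compatible with $\{0\}\times P_0(n,1,d;\R)$ in source and target, so that the trivializing homeomorphisms from Thom's Second Isotopy Lemma preserve the origin. The paper also makes exactly your observation that $\tau(0)=0$ is automatic once $\sigma(0)=0$.
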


Using a similar argument to the proof of the above theorem with the semialgebraic versions 
of Thom's Isotopy Lemmas, we can show the following theorem in the semialgebraic case.
 
\begin{thm}\label{semikeep0}
The number of semialgebraic $\mathcal{R}_0\mathcal{L}_0$ types appearing in 
$P_0(n,1,d;\R )$ is finite.
\end{thm}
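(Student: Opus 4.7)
The plan is to mirror the argument just outlined for Theorem \ref{topkeep0}, replacing Thom's Isotopy Lemmas with Shiota's semialgebraic versions (Theorems \ref{s1stisotopy} and \ref{s2ndisotopy}) together with Lemma \ref{SATlemma}. Write $J := P_0(n,1,d;\R)$ and consider the evaluation map $F : \R^n \times J \to \R$, $F(x,f) := f(x)$, which is a polynomial and hence Nash. Pass to $\widetilde F : \R^n \times J \to \R \times J$, $\widetilde F(x,f) := (f(x),f)$, together with the projection $q : \R \times J \to J$.

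First I would apply Lemma \ref{stratifiedmapping} (Fukuda's Lemma) to $\widetilde F$ and $q$ to obtain finite Whitney stratifications of Nash class $\mathcal{S}(A)$ of $A := \R^n \times J$ and $\mathcal{S}(B)$ of $B := \R \times J$, insisting in addition that $\mathcal{S}(A)$ is compatible with the zero section $\{0\} \times J \subset A$ and $\mathcal{S}(B)$ is compatible with $\{0\} \times J \subset B$. This produces Nash stratified maps $\widetilde F$ and $q$ for which both zero sections are unions of strata. Next, after a compactification of $\R^n$ in a Nash fashion (cf. Remark \ref{remark21}(2)) in order to secure properness of $\widetilde F|_A$ and $q|_B$, and after a further semialgebraic refinement of $\mathcal{S}(A)$ needed to achieve pointwise Thom $(a_{\widetilde F_t})$-regularity, Lemma \ref{SATlemma} produces a finite subdivision $J = Q_1 \cup \cdots \cup Q_u$ into Nash open simplices over each of which $\widetilde F$ is semialgebraically trivial in the sense of Theorem \ref{s2ndisotopy}. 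The trivializing semialgebraic homeomorphisms $H$ and $h$ preserve strata; since $\{0\} \times Q_i$ is a union of strata on each side, the induced fiberwise $\sigma$ and $\tau$ between any two polynomials parameterized by $Q_i$ automatically satisfy $\sigma(0)=0$ and $\tau(0)=0$. Hence all polynomials in a common $Q_i$ are semialgebraically $\mathcal{R}_0\mathcal{L}_0$ equivalent, and finiteness of the $Q_i$ gives the theorem.

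The main technical obstacle, as in the proof of Theorem \ref{benedettishiota}, is the combination of two semialgebraic constraints that do not mesh automatically: properness of the Nash maps, which forces a Nash compactification of $\R^n$ preserving the semialgebraic and stratified structure \emph{and} still compatible with the zero sections, together with the pointwise Thom $(a_{\widetilde F_t})$-regularity condition required by Lemma \ref{SATlemma}. Neither is needed in the purely topological version (Theorem \ref{topkeep0}), where Thom's 1st Isotopy Lemma can be applied directly after localizing; in the semialgebraic world these requirements interact, and arranging them simultaneously---while keeping $\{0\} \times J$ as a union of strata throughout all refinements---is the delicate step. Once both are secured, the existence of the finite decomposition $J = Q_1 \cup \cdots \cup Q_u$ together with stratum-preserving semialgebraic trivializations yields the finiteness result at once.
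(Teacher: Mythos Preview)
Your proposal is correct and follows essentially the same route as the paper: run the Fukuda--Benedetti--Shiota argument with Shiota's semialgebraic Isotopy Lemmas, insisting that the stratifications of $\R^n \times P_0(n,1,d;\R)$ and $\R \times P_0(n,1,d;\R)$ be compatible with the respective zero sections $\{0\} \times P_0(n,1,d;\R)$, so that the stratum-preserving semialgebraic trivialisations force $\sigma(0)=0$ and $\tau(0)=0$.

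One small correction to your commentary: your claim that properness and Thom $(a_f)$-regularity are ``not needed in the purely topological version (Theorem~\ref{topkeep0}), where Thom's 1st Isotopy Lemma can be applied directly'' is inaccurate. Theorem~\ref{topkeep0} is proved via the argument of Theorem~\ref{fukudafinite}, which uses Thom's \emph{2nd} Isotopy Lemma (one is trivialising a stratified \emph{mapping}, not just a stratified set), and that lemma already requires both properness and Thom regularity. The compactification step is therefore present in the topological case as well; what is genuinely new in the semialgebraic version is that the trivialising homeomorphisms must themselves be semialgebraic, which is exactly what Theorems~\ref{s1stisotopy}, \ref{s2ndisotopy} and Lemma~\ref{SATlemma} supply.
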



\subsection{Finiteness for $\mathcal{RL}$-equivalence of polynomial function 
germs}\label{localpoly1}

We say that two polynomials 
$f, g : \R^n \to \R$ are {\em locally  topologically $\mathcal{RL}$ equivalent} at $0 \in \R^n$ and write $\mathcal{RL}_0$, 
if there exist local homeomorphisms $\sigma : (U\subset \R^n,0) \to (V \subset \R^n,0), U, V$  two open neighbourhoods of $0\in \R^n$,
and $\tau : (J\subset \R,f(0)) \to (K\subset \R,g(0)), J, K$ two open intervals,  such that for their appropriate restrictions we have
$\tau \circ f = g \circ \sigma$ .
Similarly,  two polynomials
$f, g : \R^n, \to \R$ are 
{\em locally semialgebraically $\mathcal{RL}$ equivalent} at $0 \in \R^n$
if we can take the $\sigma$ and $\tau$ as local semialgebraic homeomorphisms, and write $\mathcal{RL}_0$.

In the case of $f, g \in P_0(n,1,d;\R)$  the above equivalence relation $\mathcal{RL}_0$, 
is in fact the local version of the previous $\mathcal{R}_0\mathcal{L}_0$ equivalence relation, 
i.e.  $\mathcal{RL}_0$  coincides with 
$\mathcal{R}_0\mathcal{L}_0$ in this case.
In this paper local will mean only local at $0\in\R^n$ and at the corresponding images, and the meaning of local will be as described   above,  i.e. equivalence of germs of polynomial functions at $0\in \R^n$.
.

In the original version of \cite{fukuda1}, T. Fukuda proved the following 
finiteness theorem for local topological $\mathcal{RL}$ equivalence classes 
appearing in the $P_0(n,1,d;\R )$. 
This theorem is an immediate  consequence of 
Theorem \ref{topkeep0}.

\begin{thm}\label{fukudalofinite0} 
The number of local topological $\mathcal{R}_0\mathcal{L}_0$  types appearing in 
$P_0(n,1,d;\R )$ is finite. 
\end{thm}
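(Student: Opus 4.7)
The plan is to obtain this statement as an immediate corollary of the global finiteness theorem \ref{topkeep0}, by noting that any global $\mathcal{R}_0\mathcal{L}_0$-equivalence restricts to a local one at $0$. So no new geometric work should be required; the content is entirely contained in the previously established finiteness of global $\mathcal{R}_0\mathcal{L}_0$-classes.

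First I would make the restriction argument precise. Suppose $f, g \in P_0(n,1,d;\R)$ are globally topologically $\mathcal{R}_0\mathcal{L}_0$-equivalent, witnessed by homeomorphisms $\sigma : \R^n \to \R^n$ with $\sigma(0)=0$ and $\tau : \R \to \R$ with $\tau(0)=0$ satisfying $\tau \circ f = g \circ \sigma$. For any open neighborhood $U$ of $0 \in \R^n$, set $V := \sigma(U)$, pick a small open interval $J \subset \R$ containing $0 = f(0)$ with $f(U) \subset J$, and put $K := \tau(J)$. Then $\sigma|_U : (U,0) \to (V,0)$ and $\tau|_J : (J,0) \to (K,0)$ are local homeomorphisms with $\tau \circ f|_U = g \circ \sigma|_U$, so $f$ and $g$ are locally $\mathcal{RL}_0$-equivalent at $0$. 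As noted just before the statement, for polynomials in $P_0(n,1,d;\R)$ local $\mathcal{RL}_0$-equivalence agrees with local $\mathcal{R}_0\mathcal{L}_0$-equivalence, since $f(0)=g(0)=0$ forces the local target homeomorphism to fix $0 \in \R$ as soon as the source homeomorphism does.

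Consequently, the tautological assignment sending a global $\mathcal{R}_0\mathcal{L}_0$-equivalence class in $P_0(n,1,d;\R)$ to the local $\mathcal{R}_0\mathcal{L}_0$-equivalence class of any of its members at $0$ is well-defined, and it is surjective because every element of $P_0(n,1,d;\R)$ lies in some global class. Applying Theorem \ref{topkeep0} bounds the number of local classes by the (finite) number of global classes, which is exactly the desired conclusion.

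There is essentially no genuine obstacle. The only small point to check is that base-point preservation is compatible with restriction, and this reduces to the trivial observations $\sigma(0)=0$ and $\tau(f(0)) = g(\sigma(0)) = g(0)$. All the real work has already been absorbed into Theorem \ref{topkeep0}, whose proof relied on stratifications of $\R^n \times P_0(n,1,d;\R)$ and $\R \times P_0(n,1,d;\R)$ compatible with the slices $\{0\} \times P_0(n,1,d;\R)$.
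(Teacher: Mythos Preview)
Your proposal is correct and matches the paper's approach exactly: the paper simply states that Theorem~\ref{fukudalofinite0} ``is an immediate consequence of Theorem~\ref{topkeep0},'' and your argument spells out precisely this immediate consequence, namely that global $\mathcal{R}_0\mathcal{L}_0$-equivalence restricts to local $\mathcal{R}_0\mathcal{L}_0$-equivalence, so the local classes are a quotient of the finitely many global ones.
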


As a corollary of Theorem \ref{fukudalofinite0}, we have the following.

\begin{cor}\label{fukudalofinite1}
(Fukuda's local $\mathcal{RL}_0$ finiteness theorem) 
The number of local topological $\mathcal{RL}_0$ types appearing in 
$P(n,1,d;\R )$ is finite. 
\end{cor}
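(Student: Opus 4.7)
The strategy is to reduce the corollary to Theorem \ref{fukudalofinite0} via the constant-term projection
\[
\Pi : P(n,1,d;\R ) \longrightarrow P_0(n,1,d;\R ), \qquad \Pi(f) := f - f(0).
\]
The central observation is that every polynomial $f \in P(n,1,d;\R )$ is locally $\mathcal{RL}_0$ equivalent at $0 \in \R^n$ to its projection $\Pi(f)$. To see this, take $\sigma := \mathrm{id}_{\R^n}$ and $\tau : \R \to \R$, $\tau(t) := t - f(0)$, a (globally defined) homeomorphism sending $f(0)$ to $0 = \Pi(f)(0)$. Then
\[
\tau \circ f(x) \;=\; f(x) - f(0) \;=\; \Pi(f)(x) \;=\; \Pi(f) \circ \sigma(x),
\]
so $(\sigma,\tau)$ witnesses the local $\mathcal{RL}_0$ equivalence of $f$ and $\Pi(f)$.

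The second step uses the observation already made in \S 4.2: for two polynomials $h_1, h_2 \in P_0(n,1,d;\R )$ (both vanishing at the origin), local $\mathcal{RL}_0$ equivalence at $0$ coincides with local $\mathcal{R}_0\mathcal{L}_0$ equivalence at $0$, because any target homeomorphism $\tau$ sending $h_1(0) = 0$ to $h_2(0) = 0$ automatically satisfies $\tau(0) = 0$. Combining the two observations with transitivity of the equivalence relation, for any $f_1, f_2 \in P(n,1,d;\R )$ one obtains
\[
f_1 \sim_{\mathcal{RL}_0} f_2 \;\Longleftrightarrow\; \Pi(f_1) \sim_{\mathcal{R}_0\mathcal{L}_0} \Pi(f_2).
\]
Consequently, $[f] \mapsto [\Pi(f)]$ is a well-defined injection from the set of local $\mathcal{RL}_0$ classes in $P(n,1,d;\R )$ into the set of local $\mathcal{R}_0\mathcal{L}_0$ classes in $P_0(n,1,d;\R )$, which is finite by Theorem \ref{fukudalofinite0}.

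The argument contains no serious obstacle; it is a routine reduction exploiting the fact that the target $\R$ admits translations as homeomorphisms, so that the constant term of $f$ can be absorbed into the target homeomorphism $\tau$. The only minor subtlety is matching base-point conventions: the definition of local $\mathcal{RL}_0$ equivalence in \S 4.2 requires $\tau$ to send $f(0)$ to $g(0)$ rather than $0$ to $0$, and the translation $t \mapsto t - f(0)$ is precisely what normalizes this base point to $0$, bringing us into the $P_0$ setting where Theorem \ref{fukudalofinite0} applies.
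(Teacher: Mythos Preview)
Your proof is correct and takes essentially the same approach as the paper: the paper states the corollary as an immediate consequence of Theorem~\ref{fukudalofinite0}, and in \S\ref{generaleq}, case 3) of the local equivalence discussion, it spells out exactly the reduction you give, namely $\pi(f) = f - f(0) = \tau_{-f(0)} \circ f$ together with the observation that $\mathcal{RL}_0$ on $P$ corresponds to $\mathcal{RL}_0 = \mathcal{R}_0\mathcal{L}_0$ on $P_0$.
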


\begin{example}\label{1}
Let $I = [0,1]$ be a closed interval, let 
$f_t : (\R,0) \to \R$, $t \in I$, be the polynomial 
defined by $f_t(x) := t^2 - x^2$, 
and let 
$$
\mathcal{F} := \{ f_t \in P(1,1,2;\R ) : t \in I \} . 
$$
Then there is a unique  local topological $\mathcal{RL}_0$ type appearing in 
$\mathcal{F}$, but the local topological $\mathcal{R}_0$ types 
appearing in $\mathcal{F}$ have the cardinal number of the continuum 
since the $\mathcal{R}_0$ equivalence preserves the image.  This example shows, in particular, that Fukuda's finiteness result, Theorem  \ref{fukudafinite}, cannot be generalised to $\mathcal{R}_0.$
\end{example}

Using a similar argument to the proof of Theorem \ref{fukudalofinite0} 
with the semialgebraic versions of Thom's Isotopy Lemmas, 
we can show the following finiteness theorem.

\begin{thm}\label{semialgebraiclo0} 
The number of local semialgebraic $\mathcal{R}_0\mathcal{L}_0$  types appearing in 
$P_0(n,1,d;\R )$ is finite. 
\end{thm}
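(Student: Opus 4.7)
The plan is to derive Theorem \ref{semialgebraiclo0} as an immediate corollary of Theorem \ref{semikeep0}, exactly mirroring how Theorem \ref{fukudalofinite0} was obtained from Theorem \ref{topkeep0}. By Theorem \ref{semikeep0} the space $P_0(n,1,d;\R)$ partitions into finitely many global semialgebraic $\mathcal{R}_0\mathcal{L}_0$ classes, so it suffices to show that global semialgebraic $\mathcal{R}_0\mathcal{L}_0$ equivalence implies local semialgebraic $\mathcal{R}_0\mathcal{L}_0$ equivalence at $0$. The natural map from the finite set of global classes to the set of local classes appearing in $P_0(n,1,d;\R)$ will then be surjective, finishing the proof.

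For this restriction step, suppose $f, g \in P_0(n,1,d;\R)$ and let $\sigma : \R^n \to \R^n$, $\tau : \R \to \R$ be semialgebraic homeomorphisms with $\sigma(0) = 0$, $\tau(0) = 0$ and $\tau \circ f = g \circ \sigma$. Pick any open semialgebraic neighbourhood $U$ of $0 \in \R^n$, say an open ball. Then $V := \sigma(U)$ is semialgebraic by Tarski-Seidenberg (Theorem \ref{tarski-seidenberg}), is open since $\sigma$ is a homeomorphism, and contains $0$. The restriction $\sigma|_U : (U,0) \to (V,0)$ is therefore a local semialgebraic homeomorphism, and a suitable restriction of $\tau$ gives a local semialgebraic homeomorphism of open intervals around $0 \in \R$. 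The conjugacy $\tau \circ f = g \circ \sigma$ descends verbatim to these restrictions, exhibiting a local semialgebraic $\mathcal{R}_0\mathcal{L}_0$ equivalence of the germs of $f$ and $g$ at the origin.

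All the genuine content is packaged in Theorem \ref{semikeep0}, which itself rests on Shiota's semialgebraic version of Thom's 2nd Isotopy Lemma (Theorem \ref{s2ndisotopy}) applied to the universal Nash family $F : \R^n \times P_0(n,1,d;\R) \to \R$, $F(x,t) := t(x)$, stratified via Lemma \ref{stratifiedmapping} so that the stratifications of source and target are compatible with $\{0\} \times P_0(n,1,d;\R)$ on each side. The compatibility with these sections is precisely what forces the resulting trivializing homeomorphisms to preserve the origin, and is the only non-routine point in the whole chain; the global-to-local passage described above is then purely formal and presents no obstacle.
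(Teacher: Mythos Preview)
Your proposal is correct and follows essentially the same route as the paper: the paper states that Theorem \ref{semialgebraiclo0} is obtained by the same argument as Theorem \ref{fukudalofinite0} (which was an immediate consequence of Theorem \ref{topkeep0}), replacing Thom's Isotopy Lemmas by their semialgebraic versions, i.e., deducing the local statement directly from Theorem \ref{semikeep0}. Your explicit restriction argument and your remark that the substantive input is Shiota's semialgebraic 2nd Isotopy Lemma applied with stratifications compatible with $\{0\}\times P_0(n,1,d;\R)$ match the paper's reasoning exactly.
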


We have a similar corollary also in the semialgebraic case.

\begin{cor}\label{semialgebraiclo1}
(local semialgebraic $\mathcal{RL}_0$ finiteness theorem)
The number of local semialgebraic $\mathcal{RL}_0$ types appearing in 
$P(n,1,d;\R )$ is finite. 
\end{cor}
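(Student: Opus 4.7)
The plan is to reduce the statement on $P(n,1,d;\R)$ to the already-established statement on $P_0(n,1,d;\R)$, namely Theorem \ref{semialgebraiclo0}, by a simple translation of the target. More precisely, I would define the map
\[
\Phi : P(n,1,d;\R) \longrightarrow P_0(n,1,d;\R), \qquad f \longmapsto f - f(0),
\]
which is a semialgebraic (in fact affine) bijection onto $P_0(n,1,d;\R)$, and show that each $f$ is locally semialgebraically $\mathcal{RL}_0$ equivalent to $\Phi(f)$.

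The verification of the second assertion is the core, if modest, step. Given $f \in P(n,1,d;\R)$, set $c := f(0)$ and $g := f - c \in P_0(n,1,d;\R)$. Take $\sigma = \mathrm{id}_{\R^n}$ as a local semialgebraic homeomorphism $(\R^n,0)\to(\R^n,0)$ and take $\tau : (\R, c) \to (\R, 0)$ to be the translation $\tau(y) = y - c$, which is a semialgebraic homeomorphism sending $f(0)$ to $g(0) = 0$. Then $\tau \circ f = g = g \circ \sigma$ on all of $\R^n$, and in particular on any neighbourhood of $0$, so $f$ and $g$ are locally semialgebraically $\mathcal{RL}_0$ equivalent. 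Consequently every local semialgebraic $\mathcal{RL}_0$ class in $P(n,1,d;\R)$ meets $P_0(n,1,d;\R)$, so the number of such classes in $P(n,1,d;\R)$ is at most the number of local semialgebraic $\mathcal{RL}_0$ classes in $P_0(n,1,d;\R)$.

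To finish, I would appeal to the authors' earlier remark that, on $P_0(n,1,d;\R)$, the local semialgebraic $\mathcal{RL}_0$ equivalence coincides with the local semialgebraic $\mathcal{R}_0\mathcal{L}_0$ equivalence (the constraint $\tau(0)=0$ is forced by $\sigma(0)=0$ together with $f(0)=g(0)=0$). Therefore the number of local semialgebraic $\mathcal{RL}_0$ classes in $P_0(n,1,d;\R)$ equals the number of local semialgebraic $\mathcal{R}_0\mathcal{L}_0$ classes there, which is finite by Theorem \ref{semialgebraiclo0}. Combining this with the previous step yields the corollary.

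There is no real obstacle here; the argument is purely formal. The only point that merits a brief sentence of care is making clear that the translation $\tau(y)=y-c$ is a legitimate local ``$\mathcal{L}$'' factor in the sense of the paper's definition (local homeomorphism between a neighbourhood of $f(0)$ and a neighbourhood of $g(0)$), and that it is semialgebraic, so that the whole equivalence lives in the semialgebraic category. Everything else is an invocation of Theorem \ref{semialgebraiclo0}.
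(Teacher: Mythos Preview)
Your proposal is correct and is essentially the paper's own argument: the corollary is stated without proof, but the reduction from $P(n,1,d;\R)$ to $P_0(n,1,d;\R)$ via the translation $f\mapsto f-f(0)$ (case 3) of \S\ref{generaleq}), together with the remark in \S\ref{localpoly1} that on $P_0(n,1,d;\R)$ the local $\mathcal{RL}_0$ and local $\mathcal{R}_0\mathcal{L}_0$ relations coincide, is exactly the intended derivation from Theorem~\ref{semialgebraiclo0}.
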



\subsection{General setting for finiteness of polynomial functions}\label{generaleq}

Let $(X, \sim_X)$ and $(Y,\sim_Y)$ be two sets endowed with some equivalence 
relations, and let $\pi:X\to Y$ be a surjective mapping. If $a\sim_X b$ implies $\pi(a)\sim_Y\pi(b)$ (or respectively $\pi(a)\sim_Y\pi(b)$ implies  $a\sim_X b$ ) then whenever the quotient space of $X$ is finite then the quotient space of $Y$ is finite (or respectively, whenever  the quotient space of $Y$ is finite then the quotient space of $X$ is finite).
In particular, in the case when $f\sim_X g$ if and only if 
$\tilde f := \pi (f) \sim_Y \tilde g := \pi (g)$, 
 the quotient space of $Y$ is finite if and only if  the quotient 
space of $X$ is finite.
We are going to apply these observations for the relations 
and situations mentioned below.

For $a \in \R^n,$ let $\mu_a$ denote the translation in the direction $\vec{a}$ 
on $\R^n$, and for $c \in \R,$ let $\tau_c$ denote the translation in 
the direction $\vec{c}$ on $\R$. 

\vspace{3mm}

\noindent 
{\bf [1] Local equivalence case.}

{\bf Local topological euivalence of polynomials.}

1) Let $X = P\times \R^n = P(n,1,d;\R)\times \R^n$, 
$Y=P_{0}= \{f\in P| f(0)=0\}$, and 
$\pi(f,a) = \tilde f = f \circ \mu_a - f(a) = \tau_{-f(a)} \circ f \circ \mu_a$, so that $\tilde f(0)=0$. Consider the case where 
$\sim_X = \mathcal{R} \vec{\mathcal{L}}$  is the local right-left equivalence 
with translation (defined below) $\tau \circ f \circ \phi = g$, where $\phi$ is a local 
homeomorphism and $\tau$ is a translation on $\R$, 
and $\sim_Y = \mathcal{R}_0$ is the local topological right equivalence.

 Given $(f,a), \ (g,b) \in P(n,1,d;\R) \times \R^n$ we say that they are locally topologically 
 $\mathcal{R} \vec{\mathcal{L}}$ equivalent if 
 there exists a local homeomorphism $\phi : (\R^n,b) \to (\R^n,a)$ 
such that $\tau_{g(b)-f(a)} \circ f \circ \phi = g$. 
Therefore we have 
$$
(\tau_{-f(a)} \circ f \circ \mu_a) \circ (\mu_{-a} \circ \phi \circ 
\mu_b ) = \tau_{-g(b)} \circ g \circ \mu_b.
$$
Since $\mu_{-a} \circ \phi \circ \mu_b$ is a local homeomorphism : 
$(\R^n,0) \to (\R^n,0)$,  $\tilde f$ and $\tilde g$ are locally topologically 
$\mathcal{R}_0$ equivalent.

Conversely we assume that $\tilde f$ and $\tilde g$ are locally topologically 
$\mathcal{R}_0$ equivalent.
Then there exists a local homeomorphism $\phi : (\R^n,0) \to (\R^n,0)$ 
such that $(\tau_{-f(a)} \circ f \circ \mu_a) \circ \phi 
= \tau_{-g(b)} \circ g \circ \mu_b$.
Therefore we have $\tau_{g(b)-f(a)} \circ f \circ 
(\mu_a \circ \phi \circ \mu_{-b}) = g$. 
Since $\mu_a \circ \phi \circ \mu_{-b}$ is a local homeomorphism : 
$(\R^n,b) \to (\R^n,a)$, i.e.  $(f,a)$ and $(g,b)$ 
are $\mathcal{R} \vec{\mathcal{L}}$ equivalent in our sense. 

We have shown that $(f,a)\sim_X (g,b)$ if and only if $\pi(f,a)=\tilde f\sim_Y \tilde g=\pi(g,b)$. 

\vspace{3mm}

2) Let $X = P$, $Y=P_0 = P_0(n,1,d;\R)$ and 
$\pi(f)=\tilde f = f - f(0) = \tau_{-f(0)} \circ f$, i.e. the case when both $a=b=0$.
Consider the case when $\sim_X = \mathcal{R} \vec{\mathcal{L}}$ is the local right-left equivalence 
with translation as above, and $\sim_Y = \mathcal{R}_0$ is the local topological 
right equivalence.

Using a similar argument to case 1), we can  easily show in this case 
that $f\sim_X g$ if and only if $\tilde f\sim_Y \tilde g$. 

\vspace{3mm}

3) Let $X = P$, $Y=P_0= P_0(n,1,d;\R)$ and 
$\pi(f)=\tilde f=f-f(0) = \tau_{-f(0)} \circ f$.
Both equivalences are the local topological right-left equivalence $\mathcal{RL}$.

Using a similar argument to the global topological equivalence case below, 
we can show also in this local case that $f\sim_X g$ if and only if 
$\tilde f\sim_Y \tilde g$. 

\vspace{3mm}

By the above observations and discussions, we have the following.

\begin{note}\label{note1}
i) Consider case 1); By Theorem \ref{localF}, the number of local topological $\mathcal{R}_0$ 
orbits in $P_0$ is finite.   
Therefore the number of  local topological 
 $\mathcal{R} \vec{\mathcal{L}}$ orbits in $P\times \R^n$ is finite.

ii) Consider case 2);
By Theorem \ref{localF}, the number of local topological $\mathcal{R}_0$ 
orbits in $P_0$ is finite.   
Therefore the number of local topological 
 $\mathcal{R} \vec{\mathcal{L}}$ orbits in $P$ is finite. 

iii) Consider case 3);
By Theorem \ref{fukudalofinite0}, 
the number of local topological $\mathcal{RL}$ orbits in $P_0$ is finite.
Therefore the number of local topological 
 $\mathcal{RL}$ orbits at $0\in \R^n$ in $P$  is finite. 
This is also a consequence of the previous fact ii).
\end{note}

\vspace{2mm}

{\bf Local Semialgebraic equivalence of polynomials.}

In the semialgebraic equivalence case, we can show a similar note 
to Note \ref{note1}, using Theorems \ref{localBS} and 
\ref{semialgebraiclo0}.

\vspace{2mm}

\noindent 
{\bf [2] Global equivalence case.}

{\bf  Topological equivalence of polynomial functions.}

Let $X = P = P(n,1,d;\R)$, $Y=P_0=\{f\in P \ | \ f(0)=0\}$ and 
$\pi(f)=\tilde f=f-f(0) = \tau_{-f(0)} \circ f$.
Both equivalences are the topological righ-left equivalences $\mathcal{RL}$. 

Let $f, g \in P(n,1,d;\R)$.
Assume that $f$ and $g$ are topologically $\mathcal{RL}$ equivalent. 
Then there exist homeomorphisms $\phi : \R^n \to \R^n$ 
and $\psi : \R \to \R$ such that $\psi \circ f \circ \phi = g$. 
Therefore we have 
$$
(\tau_{-g(0)} \circ \psi \circ \tau_{f(0)}) \circ 
(\tau_{-f(0)} \circ f) \circ \phi = \tau_{-g(0)} \circ g. 
$$
It follows that $\tilde f$ and $\tilde g$ are topologically $\mathcal{RL}$ 
equivalent.

Conversely we assume that $\tilde f$ and $\tilde g$ are topologically 
$\mathcal{RL}$ equivalent.
Then there exist homeomorphisms $\phi : \R^n \to \R^n$ and 
$\psi : \R \to \R$ such that 
$\psi \circ (\tau_{-f(0)} \circ f) \circ \phi = \tau_{-g(0)} \circ g$. 
Therefore we have $(\tau_{g(0)} \circ \psi \circ \tau_{-f(0)}) \circ f 
\circ \phi = g$.
Hence $f$ and $g$ are topologically $\mathcal{RL}$ equivalent.

It follows that $f\sim_X g$ if and only if $\tilde f\sim_Y \tilde g$. 

We have the following note in the global topological equivalence case.

\begin{note}\label{note2}
By Theorem \ref{fukudafinite}, the number of topological $\mathcal{RL}$ 
orbits in $P$ is finite.   
Therefore the numbers of topological $\mathcal{RL}$ orbits in $P$ and $P_0$ 
are simultaneously finite. 
\end{note}

\vspace{2mm}

{\bf Semialgebraic equivalence of polynomial functions.}

In this case we can show a similar note to Note \ref{note2}, 
using Theorem \ref{benedettishiota}.

The above considerations show that one can concentrate on proving  finiteness results 
for $P_0=P_0(n,1,d;\R)$, for instance  all  local cases are a consequence of 
the local topological right equivalence on $P_0$. The examples \ref{1} and \ref{2} show that we do not have local or global finiteness for the $\mathcal{R} \vec{\mathcal{L}}$ equivalence in $P(n,1,d;\R^n)$.


\subsection{Finiteness for $\mathcal{R}$-equivalence of polynomial function 
germs}\label{localpoly2}

The finiteness results may hold for a stronger equivalence than topological 
$\mathcal{RL}$ equivalence.
In fact, after proving Theorem \ref{fukudalofinite0}, 
T. Fukuda 
in his lectures \cite{fukuda3} pointed out that the following 
local finiteness theorem holds. 

\vspace{3mm}

\noindent
{\bf Theorem \ref{localF}.} 
(Fukuda's local $\mathcal{R}_0$ finiteness theorem) 
{\em The number of local topological $\mathcal{R}_0$ types 
appearing in $P_0(n,1,d;\R )$ is finite.
In addition, each local topological $\mathcal{R}_0$ equivalence class in 
$P_0(n,1,d;\R )$ is a semialgebraic subset of $P_0(n,1,d;\R )$.} 

\vspace{3mm}

Apart from mentioning the result in \cite{fukuda3},  it seems that Fukuda has not written up  its statement and his idea of proof (we could not find the result in the published literature).
Therefore we decided to give a proof for Fukuda's  local $\mathcal{R}$  finiteness result in this paper. 
The latter semialgebraiceness statement follows from the proof of the first one.

Before we start the proof, we prepare for some notion.
We say that two polynomials $f, g : (\R^n,0) \to (\R,0)$ 
are {\em locally topologically $\mathcal{R-IL}$ equivalent}, if there exist 
local homeomorphisms $\sigma : (\R^n,0) \to (\R^n,0)$ and 
$\tau : (Im(f),0) \to (Im(g),0)$ such that 
$\tau \circ f = g \circ \sigma$, where $Im(f)$ denotes the image of $f$. 

\begin{rem}\label{remark40}
Suppose that $f, g \in P_0(n,1,d;\R )$ are locally topologically $\mathcal{R-IL}$ 
equivalent with local homeomorphisms $\sigma : (\R^n,0) \to (\R^n,0)$ and 
$\tau : (Im(f),0) \to (Im(g),0)$.
If $\tau$ is the identity mapping on $Im(f)$ around $0 \in \R$, 
then $f$ and $g$ are locally topologically $\mathcal{R}_0$ equivalent. 
\end{rem}

\begin{proof}[Proof of Theorem \ref{localF}]

Using the Curve Selection Lemma, we can easily see the following.

\begin{prope}\label{singularpts}
Let $f : (\R^n,0) \to (\R,0)$ be a $C^{\omega}$ function germ. 
Suppose that $0 \in \R^n$ is a singular point of $f$. 
Then the singular points set $S(f)$ of $f$ is contained in $f^{-1}(0)$ 
as germs at $0 \in \R^n$.
\end{prope}

As mentioned before, $P_0(n,1,d;\R )$ is naturally identified with 
the Euclidean space $\R^{L}$. 
Let us express the correspondence by $\R^L \ni r \leftrightarrow 
f_r \in P_0(n,1,d;\R )$. 
Using Theorem \ref{tarski-seidenberg}, we can show that 
$\{ (f_r(S(f_r )), r) \in \R \times \R^L \}$ 
is a semialgebraic subset of $\R \times \R^L$. 
Therefore we have the following observation from Theorem \ref{lojasiewicz}. 

\begin{obs}\label{criticalvalue}
Let $Q \subset \R^L$ be a Nash manifold of dimension $\ell$. 
Then 
$$
\overline{\{ (f_r(S(f_r )), r) \in \R \times Q \ | \ f_r \in P_0(n,1,d;\R ) \} 
\setminus \{ 0 \} \times Q } \cap \{ 0 \} \times Q 
$$
is a semialgebraic subset of $\{ 0 \} \times Q$ of dimension less than $\ell$, 
where the closure is taken in $\R \times Q$.
\end{obs}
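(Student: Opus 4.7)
The plan is to derive both the semialgebraicity and the dimension bound directly from Tarski--Seidenberg and the finite-fiber structure of the critical-value correspondence; no Isotopy Lemma is needed here.

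First I would introduce the incidence variety $\Sigma := \{(x,r) \in \R^n \times Q : \nabla_x f_r(x) = 0\}$, which is algebraic in $\R^n \times Q$ because the coefficients of $f_r$ depend polynomially on $r$. The polynomial map $\Psi: (x,r) \mapsto (f_r(x), r)$ sends $\Sigma$ into $\R \times Q$, and by Theorem \ref{tarski-seidenberg} its image
$$
A := \Psi(\Sigma) = \{(v,r) \in \R \times Q : v \in f_r(S(f_r))\}
$$
is semialgebraic in $\R \times Q$. The set $E$ in the statement equals $\overline{A \setminus (\{0\} \times Q)} \cap (\{0\} \times Q)$, so it is semialgebraic as a combination (complement, closure, intersection) of semialgebraic sets.

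Next I would bound $\dim A$. For each fixed $r \in Q$, the fiber $A_r = f_r(S(f_r))$ is semialgebraic in $\R$ and, by Sard's theorem applied to the polynomial $f_r : \R^n \to \R$, has Lebesgue measure zero; a semialgebraic subset of $\R$ of measure zero is necessarily finite. Hence the semialgebraic projection $\pi : A \to Q$ has finite fibers, and the standard semialgebraic dimension formula then yields $\dim A = \dim \pi(A) \le \dim Q = \ell$.

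Finally, since $A \setminus (\{0\} \times Q)$ is by construction disjoint from $\{0\} \times Q$, we have
$$
E \subset \overline{A \setminus (\{0\} \times Q)} \setminus (A \setminus (\{0\} \times Q)),
$$
so $E$ sits inside the frontier of the semialgebraic set $A \setminus (\{0\} \times Q)$. Using the basic fact that the frontier of any semialgebraic set has strictly smaller dimension than the set itself, we conclude $\dim E < \dim(A \setminus (\{0\} \times Q)) \le \dim A \le \ell$, as required. The only substantive input beyond formal semialgebraic manipulations is the Sard step providing finite fibers over $Q$; that is where the argument actually does any work, but it is classical. The main conceptual point to identify is that one does not need to track how critical values approach $0$ via curve-selection analyses of the parameters --- the coarse frontier-dimension inequality combined with the finite-fiber count already suffices.
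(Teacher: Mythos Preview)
Your argument is correct. The paper's own justification is extremely terse: it simply asserts that the critical--value set is semialgebraic by Tarski--Seidenberg and then says the observation ``follows from Theorem \ref{lojasiewicz}'' (\L ojasiewicz's triangulation theorem), without further explanation. Your proof is effectively a fleshed-out version of the same strategy: semialgebraicity from Tarski--Seidenberg, then a dimension count. The one point you make explicit that the paper suppresses is the Sard step giving $\dim A \le \ell$; this ingredient is genuinely needed (if every value were critical for every $f_r$ the conclusion would fail), so your write-up is in fact more complete than the paper's. Your use of the frontier-dimension inequality $\dim(\overline{B}\setminus B) < \dim B$ is exactly what one extracts from a compatible triangulation, so there is no real methodological difference---you are just naming the consequence of Theorem~\ref{lojasiewicz} that is being used rather than citing the theorem itself.
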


Let $F : (\R^n \times \R^L, \{ 0 \} \times \R^L) \to (\R, \{ 0 \})$ be 
a polynomial function defined by $F(x,r) := f_r(x)$. 
Then we define the map
$$
\Gamma : (\R^n \times \R^L, \{ 0 \} \times \R^L) \to 
(\R \times \R^L, \{ 0 \} \times \R^L)
$$ 
by $\Gamma := (F, id_{\R^L})$, namely $\Gamma (x,r) = (f_r(x),r)$. 
Let $q : \R \times \R^L \to \R^L$ be the canonical projection. 
For $Q \subset \R^L$, we set $F_Q := F |_{\R^n \times Q}$ .

Let us consider the situation when Thom's 2nd Isotopy Lemma 
(Theorem \ref{2ndisotopy}) 
is applicable to the above map-germs $\Gamma$ and $q$ after 
stratifying $\R^n \times \R^L$ near $\{ 0\} \times \R^L$ 
and $\R \times \R^L$ near $\{ 0\} \times \R^L$ and 
taking a finite subdivision of $\R^L$ into connected Nash manifolds
(in order to see Theorem \ref{fukudalofinite0}, 
that is the finiteness for local topological 
$\mathcal{RL}$ types appearing in $P_0(n,1,d;\R )$). 
Note that for $f \in P_0(n,1,d;\R )$, Im($f$) $= \{ 0 \}$ as germs at 
$0 \in \R$ if and only if $f$ is a $0$-mapping. 
Set $\{ 0$-map$\} \subset \R^L$ as $Q_0$. 
Therefore we consider the situation when $\Gamma$ and $q$ are stratified 
in the following way.
Namely, there exists a finite partition of $\R^L \setminus Q_0$ into connected 
Nash manifolds $\R^L \setminus Q_0 = \cup_{i = 1}^u Q_i$ such that for each 
$i = 1, \cdots , u$, there are closed semialgebraic neighbourhoods $A_i$ 
of $\{ 0 \} \times Q_i$ in $\R^n \times Q_i$ and $C_i$ of $\{ 0 \} \times Q_i$ 
in $\R \times Q_i$ with the following properties:

\vspace{2mm} 

Set $B_i := C_i \cap \text{Im}(\Gamma |_{A_i})$.

(1)(Case I) If $f \in Q_i$ takes both positive and negative values as a 
function germ at $0 \in \R^n$, then we have 
$C_i \subset \text{Im}(\Gamma |_{A_i})$.

(Case II) If $f \in Q_i$ does not take negative values as a function 
germ at $0 \in \R^n$, then we have 
$C_i \cap \{ y \in \R \ | \ y \ge 0 \} \times Q_i \subset 
\text{Im}(\Gamma |_{A_i})$. 

(Case III) If $f \in Q_i$ does not take positive values as a function 
germ at $0 \in \R^n$, then we have 
$C_i \cap \{ y \in \R \ | \ y \le 0 \} \times Q_i \subset 
\text{Im}(\Gamma |_{A_i})$.

(2) $\Gamma |_{A_i} : A_i \to B_i$ and $q |_{B_i} : B_i \to Q_i$ are proper.

(3) $A_i$ and $B_i$ admit finite Whitney stratifications of Nash class 
$\mathcal{S}(A_i)$ and $\mathcal{S}(B_i)$, respectively, 
such that $\Gamma : (A_i,\mathcal{S}(A_i)) \to (B_i,\mathcal{S}(B_i))$ is 
a Thom regular stratified mapping and 
$q : (B_i,\mathcal{S}(B_i)) \to (Q_i,\{ Q_i \} )$ is a stratified mapping. 

\vspace{2mm}

\noindent
By Theorem \ref{lojasiewicz} and Lemma \ref{stratifiedmapping}, taking 
a finite subdivision of $Q_i$ and substratifying $\mathcal{S}(A_i)$ and 
$\mathcal{S}(B_i)$ if necessary, we may assume that 

\vspace{2mm}

(4) The stratification $\mathcal{S}(A_i)$ is compatible with 
$\{ 0 \} \times Q_i$, $\{ (S(f_r), r) \ | \ r \in Q_i \}$ and 
$F_{Q_i}^{-1}(0)$, 
the stratification $\mathcal{S}(B_i)$ is compatible with 
$\{ 0 \} \times Q_i$ and $\{ f_r(S(f_r)), r) \ | \ r \in Q_i \}$, and 
$Q_i$ is a Nash open simplex. 

\vspace{2mm}

By Observation \ref{criticalvalue} and Theorem \ref{stratifiedmapping}, 
after taking a finite subdivision of $Q_i$ into Nash open simplices 
if necessary, we can assume that  
\begin{equation}\label{approach}
\overline{\{ (f_r(S(f_r )), r) \in \R \times Q_i \ | \ 
f_r \in P_0(n,1,d;\R ) \} \setminus \{ 0\} \times Q_i } \cap 
\{ 0 \} \times Q_i = \emptyset , 
\end{equation}
$1 \le i \le u$. 
Taking a further subdivision of $Q_i$ into finite Nash open simplices 
if necessary, we may assume that if $U \cap \{ 0 \} \times Q_i \ne \emptyset$ 
for $U \in \mathcal{S}(B_i)$, then $U = \{ 0 \} \times Q_i$, and that
$$
\{ (f_r(S(f_r )), r) \in \R \times Q_i \ | \ 
f_r \in P_0(n,1,d;\R ) \} = U
$$
as set-germs at $\{ 0 \} \times Q_i$, if it is not empty.
In addition, if $\overline{V} \cap \{ 0 \} \times Q_i \neq \emptyset$ for 
$V \in \mathcal{S}(B_i)$, then $\overline{V} \supset \{ 0 \} \times Q_i = U$. 
Let us denote by $V_1$ and $V_2$ such two adjacent strata of $\mathcal{S}(B_i)$ 
to $U$ in Case I, and by $V$ such only one adjacent stratum of $\mathcal{S}(B_i)$ 
to $U$ in Cases II and III.
We set $V_0 := U$.

In order to see Theorem \ref{localF} holds, it suffices to show that for 
$r_1, r_2 \in Q_i$, $i = 1, \cdots , u$, $f_{r_1}$ and $f_{r_2}$ 
are topologically $\mathcal{R}$ equivalent.
Since $Q_i$ is a Nash open simplex, there is a Nash diffeomorphism 
$\tau : Q_i \to \Delta_i $ between $Q_i$ and an open simplex $\Delta_i$ 
in some Euclidean space. 
Let $s_1 = \tau (r_1)$ and $s_2 = \tau (r_2)$, and let 
$h : I \to \Delta_i$ be the homotopy defined by $h(t) = (1 - t) s_1 + t s_2$ 
for $t \in I = (-\eta ,1 + \eta )$. 
Here $\eta$ is a sufficiently small 
positive number such that if we set $J := \tau^{-1} \circ h(I)$, 
$\overline{J} \subset Q_i$ where the  closure is taken in $\R^L$.
Then $J$ is a Nash curve without end points.
Therefore $\R^n \times J$ and $\R \times J$ are Nash manifolds of dimension 
$n + 1$ and $2$, respectively.

We consider the restriction of $q \circ \Gamma$ over $J \subset \R^L$: 
\begin{equation}\label{ristriction}
\begin{CD}
(A_i \cap \R^n \times J, \{ 0\} \times J) 
@>\text{$\tilde{\Gamma}$}>> (B_i \cap \R \times J, \{ 0\} \times J)
@>\text{$\tilde{q}$}>> J 
\end{CD}
\end{equation}
We define the vector field $v$ on $J$ by 
$v := D(\tau^{-1} \circ h)(\frac{\partial}{\partial t})$. 
Note that $v$ is tangent to $J \subset \R^L$. 

When we show the local topological triviality of 
$\{ f_{\gamma} \}_{\gamma \in J}$ using Thom's 2nd Isotopy Lemma, 
we lift the vector field $v$ on $J$ to a neighbourhood $D_i$ of 
$\{ 0\} \times J$ in $B_i \cap \R \times J$ by $d\tilde{q}$ and then to 
a neighbourhood $E_i$ of $\{ 0\} \times J$ in $A_i \cap \R^n \times J$ 
by $d(\tilde{q} \circ \tilde{\Gamma})$ so that the first lifted 
vector field is controlled by a controlled tube system 
$\{T^{\prime}_j = (|T^{\prime}_j |, \pi^{\prime}_j, \rho^{\prime}_j)\}$ 
for $\{ V_j \in \mathcal{S}(B_i \cap \R \times J)\}$ 
and the second lifted vector field is controlled by a controlled 
tube system $\{T_k = (|T_k |, \pi_k, \rho_k)\}$ 
for $\{ W_k \in \mathcal{S}(A_i \cap \R^n \times J)\}$ controlled 
over $\{T^{\prime}_j\}$.

As the neighbourhood $D_i$ of $\{ 0\} \times J$ in $B_i \cap \R \times J$, 
we take $[-\epsilon_0 , \epsilon_0] \times J$, $[0, \epsilon_0] \times J$ 
and $[-\epsilon_0, 0] \times J$ ($\epsilon_0 > 0$) in Cases I, II and III, 
respectively.
Here we take $\epsilon_0 > 0$ sufficiently small so that
$[-\epsilon_0 , \epsilon_0] \times J \subset V_1 \cup V_0 \cup V_2$, 
$[0, \epsilon_0] \times J \subset V \cup V_0$ and 
$[-\epsilon_0, 0] \times J \subset V \cup V_0$ in Cases I, II and III, 
respectively.
Let $\hat{V}_0 := \{ 0\} \times J$, $\hat{V_1} := (0,\epsilon_0) \times J$, 
$\hat{V}_2 := (-\epsilon_0,0)$, $\hat{V}_3 := \{ \epsilon_0 \} \times J$ 
and $\hat{V}_4 := \{ -\epsilon_0\} \times J$. 
We consider canonical tubular neighborhood systems (in 
$D_i \subset \R \times J$) for $\{ \hat{V_j}\}$ as follows: 
$\hat{T}^{\prime}_0 = (|\hat{T}^{\prime}_0|, \hat{\pi}^{\prime}_0, 
\hat{\rho}^{\prime}_0)$ where $|\hat{T}^{\prime}_0| := 
\{ (y,s) \ | \ \hat{\rho}^{\prime}_0 (y,s) \le \epsilon_1^2 \}$, 
$0 < \epsilon_1 \le \epsilon_0$,
$\hat{\pi}^{\prime}_0 (y,s) = (0,s)$ and $\hat{\rho}^{\prime}_0(y,s) = y^2$; 
$\hat{T}^{\prime}_3 = (|\hat{T}^{\prime}_3|, \hat{\pi}^{\prime}_3, 
\hat{\rho}^{\prime}_3)$ where $|\hat{T}^{\prime}_3| := 
\{ (y,s) \ | \ \hat{\rho}^{\prime}_3 (y,s) < \epsilon_3^2 \}$, 
$0 < \epsilon_3 \le \epsilon_0$, 
$\hat{\pi}^{\prime}_3 (y,s) = (\epsilon_0 ,s)$ and  
$\hat{\rho}^{\prime}_3(y,s) = (y - \epsilon_0)^2$; 
$\hat{T}^{\prime}_4 = (|\hat{T}^{\prime}_4|, \hat{\pi}^{\prime}_4, 
\hat{\rho}^{\prime}_4)$ where $|\hat{T}^{\prime}_4| := 
\{ (y,s) \ | \hat{\rho}^{\prime}_4 (y,s) < \epsilon_4^2 \}$, 
$0 < \epsilon_4 \le \epsilon_0$, 
$\hat{\pi}^{\prime}_4 (y,s) = (-\epsilon_0 ,s)$ and 
$\hat{\rho}^{\prime}_4(y,s) = (y + \epsilon_0)^2$; 
$\hat{T}^{\prime}_1 = (|\hat{T}^{\prime}_1|, \hat{\pi}^{\prime}_1, 
\hat{\rho}^{\prime}_1)$ where $|\hat{T}^{\prime}_1| = \hat{V}_1$, 
$\hat{\pi}^{\prime}_1 = id_{\hat{V}_1}$ and $\hat{\rho}^{\prime}_1 = 0$;
$\hat{T}^{\prime}_2 = (|\hat{T}^{\prime}_2|, \hat{\pi}^{\prime}_2, 
\hat{\rho}^{\prime}_2)$ where $|\hat{T}^{\prime}_2| = \hat{V}_2$, 
$\hat{\pi}^{\prime}_2 = id_{\hat{V}_2}$ and $\hat{\rho}^{\prime}_2 = 0$. 
Then we define the first lifted vector field $w$ of $v$ on $D_i$ 
by $w(y,s) := (0,v(s))$.
This is a vector field controlled by the controlled tube system 
of Nash class  
$\{ \hat{T}^{\prime}_j \ | \ j = 0, 1, 2, 3, 4\}$,
$\{ \hat{T}^{\prime}_j \ | \ j = 0, 1, 3\}$ and 
$\{ \hat{T}^{\prime}_j \ | \ j = 0, 2, 4\}$ for 
$\{ \hat{V}_j \in \mathcal{S}([-\epsilon_0 , \epsilon_0] \times J)\}$,
$\{ \hat{V}_j \in \mathcal{S}([0 , \epsilon_0] \times J)\}$ and 
$\{ \hat{V}_j \in \mathcal{S}([-\epsilon_0 , 0] \times J)\}$ 
in Cases I, II and III, respectively.
Actually, $d\hat{\rho}^{\prime}_j (w) = 0$, $j = 0, 1, 2, 3, 4$. 
In particular, $w$ is tangent to 
$\{ (y,s) \in D_i \ | \ y = \text{constant} \} $ in 
Cases I, II and III. 

Let $W_0 := \{ 0\} \times J \subset \R^n \times J$, and let 
$W_1, \cdots , W_d \in \mathcal{S}(A_i)$ be all the adjacent strata 
to $W_0$.
As the neighbourhood $E_i$ of $\{ 0\} \times J$ in $A_i \cap \R^n \times J$, 
we take $\tilde{\Gamma}^{-1}(D_i) \cap A_i$. 
We may assume that $E_i \subset W_0 \cup W_1 \cup \cdots \cup W_d.$ 
Let $\mathring{D}_i$ be $(-\epsilon_0 , \epsilon_0) \times J$, 
$[0, \epsilon_0) \times J$ and $(-\epsilon_0, 0] \times J$ in Cases 
I, II and III, respectively, and let 
$\mathring{E}_i := \tilde{\Gamma}^{-1}(\mathring{D}_i) \cap A_i$. 
We further let $\breve{D}_i := \{ \pm \epsilon_0 \} \times J$, 
$\{ \epsilon_0\} \times J$ and $\{ - \epsilon_0\} \times J$ in Cases 
I, II and III, respectively, and let 
$\breve{E}_i := \tilde{\Gamma}^{-1}(\breve{D}_i) \cap A_i$. 
Set $\mathcal{S}(\mathring{E}_i ) := \{ W \cap \mathring{E}_i \ | \ 
W \in \mathcal{S}(A_i)\}$ and 
$\mathcal{S}(\breve{E}_i ) := \{ \text{connected componets of} \ 
\breve{E}_i \}$. 
Define $\mathcal{S}(E_i) := \mathcal{S}(\mathring{E}_i ) \cup 
\mathcal{S}(\breve{E}_i )$. 

Now we lift the vector field $w$ on $E_i$ so that the lifted vector field 
$\beta$ is controlled by a controlled tube system of Nash class 
$\{\hat{T}_k = (|\hat{T}_k |, \hat{\pi}_k, \hat{\rho}_k)\}$ 
for $\{\hat{W}_k \in \mathcal{S}(E_i)\}$ controlled 
over $\{\hat{T}^{\prime}_j\}$. 
The flows of $\beta$ and $w$ yield the topological triviality of 
$\{ f_{\gamma} \}_{\gamma \in J}$. 
By construction, the flow of $w$ yields the triviality keeping 
``$y = $constant". 
Since $\{ 0\} \times J \in \mathcal{S}(E_i)$,
$\beta$ is tangent to $\{ 0\} \times J$. 
Therefore $f_{r_1}$ and $f_{r_2}$ are locally topologically 
$\mathcal{R-IL}_0$ equivalent with local homeomorphisms 
$\sigma : (\R^n,0) \to (\R^n,0)$ and $\tau : (Im(f),0) \to (Im(g),0)$ 
such that $\tau$ is the identity mapping on $Im(f)$.
It follows from Remark \ref{remark40} that $f_{r_1}$ and $f_{r_2}$ are 
locally topologically $\mathcal{R}_0$ equivalent. 
\end{proof}

\begin{rem}\label{remark41}
Let us regard $P_0(n,1,d;\R )$ as a family of polynomials, indexed by $r\in \R^L$.
From the proof of Theorem \ref{fukudalofinite0} using Thom's 2nd Isotopy Lemma, 
we can see that a finiteness result for the local $\mathcal{RL}$ triviality 
of the family $P_0(n,1,d;\R )$ holds. 
On the other hand, we cannot see from the proof of the above theorem 
whether a finiteness result for the local $\mathcal{R}$ triviality of the 
family $P_0(n,1,d;\R )$ holds or not. 

In the proof of Theorem \ref{localF}  we could assume that 
$$
\overline{\{ (f_r(S(f_r )), r) \in \R \times Q_i \ | \ 
f_r \in P_0(n,1,d;\R ) \} \setminus \{ 0\} \times Q_i } \cap 
\{ 0 \} \times Q_i = \emptyset .
$$
Therefore there exists a positive constant $\epsilon_0 > 0$ such that 
in Cases I, II and III, $[-\epsilon_0 , \epsilon_0] \times J$, $[0, \epsilon_0] \times J$ 
and $[-\epsilon_0, 0] \times J$, respectively, is contained in
$$
B_i \cap \R \times J \setminus 
\overline{\{ (f_s(S(f_s )), s) \in \R \times J \ | \ 
f_s \in P_0(n,1,d;\R ) \} \setminus \{ 0\} \times J } , 
$$
where the closure is taken in $\R \times J$.
Remember that $\overline{J} \subset Q_i$, where the closure is taken in $\R^L$. 
In general, even if we take a positive constant $\epsilon_0 > 0$ arbitrarily small, 
in Cases I, II and III, $[-\epsilon_0 , \epsilon_0] \times Q_i$, $[0, \epsilon_0] \times Q_i$ 
and $[-\epsilon_0, 0] \times Q_i$, respectively, may not 
be contained in 
$$
B_i \setminus 
\overline{\{ (f_r(S(f_r )), r) \in \R \times Q_i \ | \ 
f_r \in P_0(n,1,d;\R ) \} \setminus \{ 0\} \times Q_i }. 
$$
In addition, taking a finite subdivision of $Q_i$ into Nash open simplices, 
we cannot avoid this situation. 
Therefore we cannot always see a local $\mathcal{R}$ triviality over $Q_i$, 
using a similar argument to the case of local $\mathcal{R}$ equivalence.
\end{rem}

\begin{example}\label{2}
Let $I = [0,1]$ be a closed interval, let 
$g_t : (\R,0) \to (\R,0)$, $t \in I$, be a polynomial defined 
by $g_t(x) := t^2 - (x - t)^2$, 
and let 
$$
\mathcal{G} := \{ g_t \in P_0(1,1,2;\R ) : t \in I \} . 
$$
Then the number of local topological $\mathcal{R}$ types appearing in 
$\mathcal{G}$ is 2, but the global topological $\mathcal{R}$ types appearing in 
$\mathcal{G}$ as global polynomial functions have the cardinal number of 
the continuum.  Note this family is $\mathcal{R}$ equivalent in $P(n,1,2;\R)$ with the family in the example \ref{1}.
\end{example}

By Fact 2.1 in M. Shiota \cite{shiota3} (Theorem II.7.1 in M. Shiota 
\cite{shiota2}), we have the following fact in the case of semialgebraic 
equivalence.

\begin{thm}\label{shiotabirk}
Let $X \subset Y \subset \R^n$ be semialgebraic sets, and let 
$f, g : Y \to \R$ be semialgebraic functions with 
$f^{-1}(0) = g^{-1}(0) = X$. 
If the germs of $f$ and $g$ at $X$ are semialgebraically 
$\mathcal{RL}$ equivalent, then the germs of $f$ and $g$ are 
semialgebraically $\mathcal{R}$ equivalent or the germs of $f$ 
and $-g$ are so.
Here we can choose the semialgebraic homeomorphisms of 
equivalence to be the identity mapping on $X$.
\end{thm}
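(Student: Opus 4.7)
The plan is to reduce the problem to constructing a self-equivalence of $g$ that realizes the target homeomorphism $\tau$, and to handle the sign ambiguity through the $\pm g$ dichotomy. First, observe that $\tau:(\R,0)\to(\R,0)$ is a semialgebraic homeomorphism fixing the origin, hence strictly monotonic on a neighbourhood of $0$. If $\tau$ is order-reversing, replace $(g,\tau)$ by $(-g,-\tau)$; this does not alter the hypotheses (since $(-g)^{-1}(0)=X$ and $-\tau$ is now order-preserving) and explains the appearance of $-g$ in the conclusion. So I may assume $\tau$ is order-preserving, i.e.\ preserves signs near $0$, and in particular sends $\{y>0\}$ to $\{y>0\}$ and $\{y<0\}$ to $\{y<0\}$ germwise.

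Next, rewrite the equivalence as $f=\tau^{-1}\circ g\circ\sigma$. To upgrade this to a pure $\mathcal R$-equivalence, it suffices to produce a semialgebraic homeomorphism $\psi:(Y,X)\to(Y,X)$ with $\psi|_X=\mathrm{id}_X$ and $g\circ\psi=\tau^{-1}\circ g$ as germs at $X$; then $\sigma':=\psi\circ\sigma$ satisfies $g\circ\sigma'=f$. A second adjustment makes the equivalence the identity on $X$: since $\sigma'|_X$ is a semialgebraic self-homeomorphism of $X$, I would extend $(\sigma'|_X)^{-1}$ to a semialgebraic homeomorphism $\rho$ of a neighbourhood of $X$ in $Y$ that preserves the fibres of $g$ (built from a fibrewise trivialisation of $g$ near $X$), so that $\sigma'':=\rho\circ\sigma'$ still satisfies $g\circ\sigma''=f$ while $\sigma''|_X=\mathrm{id}_X$.

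The construction of $\psi$ is the central step. I would apply Fukuda's Lemma \ref{stratifiedmapping} to obtain finite Whitney stratifications of Nash class of a suitable neighbourhood of $X$ in $Y$ compatible with $X$, and of $\R$ compatible with $\{0\}$, so that $g$ is a Thom $(a_g)$-regular stratified mapping there. Applying Shiota's semialgebraic Thom first isotopy lemma, Theorem \ref{s1stisotopy}, separately on the pieces $\{g>0\}$ and $\{g<0\}$ (after restricting to a relatively compact semialgebraic tubular neighbourhood so that $g$ is proper) yields semialgebraic trivialisations of these two halves over small intervals $(0,\varepsilon)$ and $(-\varepsilon,0)$. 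Using these trivialisations, $\psi$ is defined fibrewise by keeping the ``transverse coordinate'' of a point $y$ and translating its value from the fibre $g^{-1}(c)$ to the fibre $g^{-1}(\tau^{-1}(c))$; since $\tau$ preserves signs, positive fibres go to positive fibres and negative to negative, and the map extends by the identity on $X=g^{-1}(0)$.

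The main obstacle is verifying that this fibrewise assembly is continuous at $X$. Even though $\psi$ is clearly well defined and semialgebraic on each side, one needs the fibres $g^{-1}(c)$ to collapse onto $X$ in a uniform semialgebraic manner as $c\to 0$, so that $\psi$ extends continuously by $\mathrm{id}_X$; this is a Łojasiewicz-type estimate on $g$ near $X$, guaranteed by working inside a semialgebraic tubular neighbourhood built from the stratification. Once this uniform behaviour is confirmed, the composition $\sigma''=\rho\circ\psi\circ\sigma$ gives the desired semialgebraic $\mathcal R$-equivalence, equal to the identity on $X$, completing the proof (up to the sign dichotomy producing either $f\sim g$ or $f\sim -g$).
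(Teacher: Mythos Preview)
The paper does not actually prove Theorem~\ref{shiotabirk}: it is quoted verbatim from Shiota's work (Fact~2.1 in \cite{shiota3}, which is Theorem~II.7.1 in \cite{shiota2}) and then used as a black box to derive Corollary~\ref{RvsRL} and hence Theorem~\ref{localBS}. So there is no ``paper's own proof'' to compare your attempt against.

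That said, your sketch captures the correct strategic skeleton of Shiota's argument: reduce to $\tau$ order-preserving (absorbing the sign into $-g$), and then manufacture a semialgebraic self-homeomorphism $\psi$ of a neighbourhood of $X$ satisfying $g\circ\psi=\tau^{-1}\circ g$, so that $\psi\circ\sigma$ realises the $\mathcal R$-equivalence. Where your write-up is thin is precisely where the work lies. First, $Y$ is only assumed to be a semialgebraic \emph{set}, not a Nash manifold, so invoking Theorem~\ref{s1stisotopy} and speaking of a ``tubular neighbourhood of $X$ in $Y$'' is not automatic; one has to work with a stratification of $Y$ and argue stratum-by-stratum. Second, the properness hypothesis required by Theorem~\ref{s1stisotopy} is not available from the data; cutting down to a ``relatively compact'' neighbourhood of $X$ does not by itself make $g$ proper onto an interval, because fibres $g^{-1}(c)$ need not stay inside such a neighbourhood. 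Third, your continuity claim at $X$ is the delicate point: the fibrewise map you build off $X$ is only guaranteed to extend by the identity on $X$ if the trivialisations on the two sides are chosen compatibly with the limiting behaviour toward $X$, and this is exactly what a \L ojasiewicz-type inequality for $g$ relative to $X$ provides --- you cite it but do not carry it out. Finally, the step producing $\rho$ (extending $(\sigma'|_X)^{-1}$ to a fibre-preserving homeomorphism) is a second nontrivial isotopy-extension problem of the same flavour as building $\psi$, and deserves the same care.

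In summary: right idea, but as written it is a plausibility outline rather than a proof; for the paper's purposes one simply cites Shiota, and if you want a self-contained argument you should consult the proof of Theorem~II.7.1 in \cite{shiota2}, where these technicalities are handled.
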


As a corollary of this theorem, we have the following result.

\begin{cor}\label{RvsRL}
Let $f, g \in P_0(n,1,d;\R )$.
If $f$ and $g$ are locally semialgebraically $\mathcal{R}_0\mathcal{L}_0$ equivalent, then
$f$ and $g$ are locally semialgebraically $\mathcal{R}_0$ equivalent or 
$f$ and $-g$ are locally semialgebraically $\mathcal{R}_0$ equivalent.
\end{cor}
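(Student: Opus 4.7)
The plan is to reduce the given semialgebraic $\mathcal{R}_0\mathcal{L}_0$ equivalence to a purely $\mathcal{L}$ equivalence so that Theorem \ref{shiotabirk} can be applied directly to $f$ and $g \circ \sigma$. First I would unpack the hypothesis: there exist local semialgebraic homeomorphisms $\sigma : (\R^n, 0) \to (\R^n, 0)$ and $\tau : (\R, 0) \to (\R, 0)$ and an open semialgebraic neighborhood $Y$ of $0 \in \R^n$ on which $\tau \circ f = g \circ \sigma$ holds (after shrinking $Y$ so that $f(Y)$ lies in the domain of $\tau$ and $\sigma(Y)$ in the domain of $g$). Setting $\tilde{g} := g \circ \sigma : Y \to \R$ produces a continuous semialgebraic function satisfying $\tau \circ f = \tilde{g}$ on $Y$, and since $\tau$ is a homeomorphism with $\tau(0) = 0$, the zero sets coincide: $f^{-1}(0) \cap Y = \tilde{g}^{-1}(0) \cap Y =: X$.

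Next I would apply Theorem \ref{shiotabirk} to the pair $f, \tilde{g} : Y \to \R$ with common zero set $X$. The germs of $f$ and $\tilde{g}$ at $X$ are semialgebraically $\mathcal{RL}$-equivalent; in fact purely $\mathcal{L}$-equivalent via $\tau$, the $\mathcal{R}$-homeomorphism being the identity on $Y$. The theorem then delivers a local semialgebraic homeomorphism $\phi : (Y,X) \to (Y,X)$ that is the identity on $X$ (so in particular $\phi(0) = 0$), and for which either $f = \tilde{g} \circ \phi$ or $f = -\tilde{g} \circ \phi$ holds as germs at $X$, hence in particular as germs at $0$.

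Finally, substituting $\tilde{g} = g \circ \sigma$ yields $f = g \circ (\sigma \circ \phi)$ or $f = -g \circ (\sigma \circ \phi)$ as germs at $0 \in \R^n$, and since $\sigma \circ \phi$ is a local semialgebraic homeomorphism preserving the origin, this is precisely the statement that $f$ and $g$ (respectively $f$ and $-g$) are locally semialgebraically $\mathcal{R}_0$-equivalent. I do not expect a serious obstacle: the only point that needs mild care is the initial shrinking step, namely ensuring that the semialgebraic equality $\tau \circ f = g \circ \sigma$ holds on a genuine open neighborhood of $0$, and therefore on an open neighborhood of all of the germ of $X$ at $0$, so that the germs-at-$X$ hypothesis of Theorem \ref{shiotabirk} is legitimately available. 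This, however, is immediate from the definition of local semialgebraic equivalence together with the continuity of $\sigma$ and $\tau$.
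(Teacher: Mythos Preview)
Your proposal is correct and follows essentially the same approach as the paper's own proof: absorb the source homeomorphism $\sigma$ into one of the two functions so as to obtain two semialgebraic functions on a common neighbourhood with identical zero set $X$, observe that they are $\mathcal{RL}$-equivalent as germs at $X$ via the target homeomorphism $\tau$ alone, and then invoke Theorem~\ref{shiotabirk}. The only cosmetic difference is that the paper sets $\tilde f := f \circ \sigma$ and $\tilde g := g|_U$ (writing the hypothesis as $\tau \circ f \circ \sigma = g$), whereas you set $\tilde g := g \circ \sigma$ and keep $f$; this is immaterial.
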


\begin{proof}
Suppose that $f, g \in P_0(n,1,d;\R )$ are locally semialgebraically $\mathcal{R}_0\mathcal{L}_0$
equivalent. 
Therefore there exist semialgebraic homeomorphisms 
$\sigma : U \to V$ with $\sigma (0) = 0$ and $\tau : P \to Q$ 
with $\tau (0) = 0$ such that $\tau \circ f \circ \sigma = g$, where 
$U$ and $V$ (respectively $P$ and $Q$) are open semialgebraic 
neighbourhoods of $0$ in $\R^n$ (respectively $0$ in $\R$). 
Set $\tilde{f} := f \circ \sigma$ and $\tilde{g} := g|_U$. 
Then $\tilde{f}, \tilde{g} : U \to \R$ are semialgebraic functions, 
and $\tilde{f}^{-1}(0) = \tilde{g}^{-1}(0)$ denoted by $X$ after this. 
In addition, $\tilde{f}$ and $\tilde{g}$ are locally semialgebraically 
$\mathcal{R}_0\mathcal{L}_0$ equivalent. 
Here the source homeomorphism of the local semialgebraic $\mathcal{R}_0\mathcal{L}_0$
equivalence is the identity mapping on $U$. 
Therefore $\tilde{f}$ and $\tilde{g}$ are locally semialgebraically 
$\mathcal{R}_0\mathcal{L}_0$ equivalent as function germs at $X \subset U$. 
By Theorem \ref{shiotabirk}, $\tilde{f}$ and $\tilde{g}$ are 
semialgebraically $\mathcal{R}_0$ equivalent or $\tilde{f}$ and 
$-\tilde{g}$ are semialgebraically $\mathcal{R}_0$ equivalent 
as function germs at $X$. 
Since we can choose the semialgebraic homeomorphisms of equivalence 
to be the identity mapping on $X$, it follows that $f$ and $g$ are 
semialgebraically $\mathcal{R}_0$ equivalent or $f$ and $-g$ are 
semialgebraically $\mathcal{R}_0$ equivalent as function germs 
at $0 \in \R^n$.
\end{proof}

Combining this corollary with Theorem \ref{semialgebraiclo0}, we have
the following finiteness theorem in the semialgebraic case.

\vspace{3mm}

\noindent
{\bf Theorem \ref{localBS}.}  
{\em The number of local semialgebraic $\mathcal{R}_0$ types 
appearing in $P_0(n,1,d;\R )$ is finite.} 

\vspace{3mm}

\noindent
We may call the above result {\em Shiota's local semialgebraic $\mathcal{R}_0$ 
finiteness theorem}.
 
At the end of this subsection, we make an interesting remark on $\mathcal{R}_0$ 
equivalence of polynomial function germs, which was shown by M. Shiota.

\begin{rem}\label{remark42}
Shiota proved that there exist two homogeneous polynomials 
$f, g : (\R^7,0) \to (\R,0) $ of the same degree with isolated singularities 
which are locally topologically $\mathcal{R}_0$ equivalent but not locally semialgebraically 
$\mathcal{R}_0$ equivalent.
(See Example II.7.9 in \cite{shiota2} or Fact 2.3 in \cite{shiota3}).
\end{rem}


\bigskip
\section{A generalisation.}
\label{generalisation}

In this section we treat a generalisation of the Fukuda-Benedetti-Shiota 
theorem. 
We can read Fukuda's finiteness theorem (respectively Benedetti-Shiota's one) 
from its proof as a finiteness result on topological $\mathcal{RL}$ 
triviality (respectively semialgebraic $\mathcal{RL}$ one) for a polynomial family 
of polynomial functions as follows:

\begin{thm}\label{fukudabenedettishiota} 
Given a polynomial family of polynomial functions from $\R^n$ 
to $\R$, there exists a finite subdivision of the parameter space 
into Nash open simplices such that over each Nash open simplex
the family of polynomial functions is topologically $\mathcal{RL}$ trivial 
(respectively semialgebraically $\mathcal{RL}$ trivial).
\end{thm}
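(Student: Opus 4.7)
The plan is to read Fukuda's proof (respectively the Benedetti--Shiota refinement of it) parametrically. Let $P$ denote the parameter space of the polynomial family, write $F : \R^n \times P \to \R$, $F(x,t) := f_t(x)$, for the underlying polynomial map, and form the Nash map $\Gamma := (F, \mathrm{id}_P) : \R^n \times P \to \R \times P$ together with the canonical projection $q : \R \times P \to P$. The $\mathcal{RL}$ triviality of the family $\{f_t\}$ over a piece $Q \subset P$ is equivalent to the topological (respectively semialgebraic) triviality of the stratified mapping $\Gamma$ over $Q$ in the sense of Theorems \ref{2ndisotopy} and \ref{s2ndisotopy}, so the goal is to bring the pair $(\Gamma, q)$ into the form required by those isotopy lemmas after a finite subdivision of $P$ into Nash open simplices.

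First I would invoke Lemma \ref{stratifiedmapping} to equip $\R^n \times P$ and $\R \times P$ with finite Whitney stratifications of Nash class, with respect to which $\Gamma$ is a Nash stratified mapping. I would arrange compatibility with the critical locus $\{(x,t) : x \in S(f_t)\}$, the critical value set $\{(f_t(S(f_t)), t)\}$ (which is semialgebraic by Theorem \ref{tarski-seidenberg}), and the zero set $F^{-1}(0)$. Substratifying if necessary, one ensures that $\Gamma$ is Thom $(a_\Gamma)$-regular on every adjacent pair of strata. Then, by Theorem \ref{lojasiewicz} applied to $P$ together with the traces of these stratifications on $P$, I would subdivide $P$ into finitely many Nash open simplices $Q_1 \sqcup \cdots \sqcup Q_u$ so that, over each $Q_i$, the stratifications remain Whitney of Nash class and $q$ becomes a Nash stratified mapping onto $(Q_i,\{Q_i\})$.

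At this stage the pair $(\Gamma, q)$ satisfies the regularity hypotheses of Thom's Second Isotopy Lemma (Theorem \ref{2ndisotopy}) over each $Q_i$ except for properness; this is the \emph{main obstacle}, since polynomial maps on $\R^n$ are not proper and Thom regularity may degenerate as $\|x\| \to \infty$. I would handle this via Fukuda's compactification trick: intersect with a large closed ball $\overline{B_R}$ and include its boundary sphere as additional strata, using the fact that the asymptotic behaviour of a polynomial family is controlled by finitely many semialgebraic patterns, so that a further Nash-simplicial subdivision of each $Q_i$ (again provided by Theorem \ref{lojasiewicz}) makes the Thom regularity and the behaviour of the critical value set uniform on each piece. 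After this truncation both $\Gamma|_A : A \to B$ and $q|_B : B \to Q_i$ become proper, and Theorem \ref{2ndisotopy} yields topological $\mathcal{RL}$ triviality of $\{f_t\}_{t \in Q_i}$; replacing Theorem \ref{2ndisotopy} by Shiota's semialgebraic counterpart Theorem \ref{s2ndisotopy} yields the semialgebraic statement, while the semialgebraic character of every construction is preserved at every stage by Lemma \ref{stratifiedmapping} and Theorem \ref{lojasiewicz}.
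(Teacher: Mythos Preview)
Your overall architecture---form $\Gamma=(F,\mathrm{id}_P)$ and $q$, stratify via Lemma~\ref{stratifiedmapping} compatibly with the critical data, subdivide $P$ via Theorem~\ref{lojasiewicz}, and then invoke Theorem~\ref{2ndisotopy} (respectively Theorem~\ref{s2ndisotopy})---matches the paper's scheme. The weak point is the properness step. Intersecting with a fixed ball $\overline{B_R}$ gives you triviality of the truncated family $\{f_t|_{\overline{B_R}}\}$, not of $\{f_t:\R^n\to\R\}$; the homeomorphisms produced by the isotopy lemma live on $\overline{B_R}$, and there is no mechanism in your sketch for extending them to $\R^n$. Moreover, over a non-compact Nash open simplex $Q_i$ no single radius $R$ controls the critical locus uniformly (e.g.\ $f_t(x)=(x-t)^2$ loses its critical point on $[-R,R]$ once $|t|>R$), so ``choose $R$ large and subdivide further'' cannot terminate in finitely many pieces while still covering all of $Q_i$.

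The paper (following Fukuda's original argument, and made explicit in the proof of Theorem~\ref{finitenessNF}) resolves this by \emph{projectivisation} rather than ball truncation: one embeds $\R^n$ and $\R$ into products of projective lines $\R P_{[n]}$, $\R P^1$, takes the Zariski closure of the graph, and stratifies the compactified picture compatibly with the affine parts $\R^n\times\R^d$ and $\R\times\R^d$. Properness is then automatic, and because the stratification is compatible with the affine locus, the semialgebraic trivialisation produced by Theorem~\ref{s2ndisotopy} (via Lemma~\ref{SATlemma}) restricts to a trivialisation of $\Gamma$ over each $Q_i$, which is exactly the global $\mathcal{RL}$ triviality of $\{f_t:\R^n\to\R\}_{t\in Q_i}$. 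Replacing your ball step by this projectivisation makes the argument go through.
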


We define the notions of {\em topological $\mathcal{RL}$ equivalence} 
and {\em semialgebraic $\mathcal{RL}$ equivalence} for Nash functions defined 
on a Nash manifold $M$ in a similar way to the above polynomial case 
with $M = \R^n$.
Then we can generalise Theorem \ref{fukudabenedettishiota} to the 
following form.

\vspace{3mm}

\noindent {\bf Theorem \ref{finitenessNF}.}
{\em Let $\{ f_t : M \to \R \ | \ t \in J \}$ be a Nash family of Nash 
functions defined on a Nash manifold $M$ with a semialgebraic 
parameter space $J$.
Namely, the function $F : M \times J \to \R$ defind by 
$F(x,t) := f_t(x)$ is a Nash function. 
Then there exists a finite subdivision of $J$ into Nash open simplices 
$J = Q_1 \cup \cdots \cup Q_u$ such that 
$$
\{ f_t : M \to \R \ | \ t \in Q_i \} 
$$
is semialgebraically $\mathcal{RL}$ trivial over each $Q_i$.} 

\vspace{3mm}

\begin{rem}\label{remark11}
J. Bochnak, M. Coste and M.-F. Roy  showed  in \cite{bochnakcosteroy}  
a finiteness theorem for semialgebraic triviality of a semialgebraic family 
of semialgebraic functions from $\R^n$ to $\R$.
\end{rem}

\begin{proof}[Proof of Theorem \ref{finitenessNF}]
The Fukuda-Benedetti-Shiota theorem is an algebraic result.
When we show the global triviality using the 2nd Thom's Isotopy Lemma
(Theorem \ref{2ndisotopy}) or its semialgebraic version (Theorem 
\ref{s2ndisotopy}), we can consider the projectivisation in order to get 
the properness of mappings. 
On the other hand, Theorem \ref{finitenessNF} is a generalisation of the
Fukuda-Benedetti-Shiota theorem to the Nash case.
We first reduce the Nash case to the algebraic one in the proof of 
Theorem \ref{finitenessNF}.
This reduction process follows from similar arguments to those
discussed in \cite{koike3, koikeshiota}.

Let us consider the composite of the mapping
$(F,id_J) : M \times J \to \R \times J$ defined by
$(F,id_J)(x,t) := (f_t(x),t)$ and the canonical projection
$q : \R \times J \to J$. 
Here we recall a useful result by M. Shiota.

\begin{thm}\label{algebraicstructure}(M. Shiota \cite{shiota1})
A Nash manifold is Nash diffeomorphic to a nonsingular,
affine algebraic variety.
\end{thm}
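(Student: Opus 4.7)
The plan is to reduce the Nash setting to the algebraic one via Shiota's theorem, compactify to achieve properness, and then apply Shiota's semialgebraic version of Thom's 2nd Isotopy Lemma after a suitable stratification and a finite subdivision of the parameter space $J$ into Nash open simplices.

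First, by Theorem \ref{algebraicstructure}, I may assume that $M$ is a nonsingular affine algebraic subvariety of some $\R^m$; after Remark \ref{remark21}(2) I may likewise assume $J$ is a bounded semialgebraic subset of $\R^\ell$. Consider the diagram
$$
M \times J \xrightarrow{(F,id_J)} \R \times J \xrightarrow{q} J,
$$
where $(F,id_J)(x,t) = (f_t(x),t)$ and $q$ is the canonical projection. Since $F$ is Nash, its graph is a semialgebraic subset of $M \times J \times \R$, so the composite $q \circ (F,id_J)$ is a semialgebraic mapping between semialgebraic sets.

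Second, to obtain properness (as needed for Theorems \ref{2ndisotopy} and \ref{s2ndisotopy}), I compactify by the projectivisation trick that is standard in the Fukuda--Benedetti--Shiota argument: embed $M$ into a projective closure $\overline{M} \subset \proj^N$ via the affine algebraic structure provided by Theorem \ref{algebraicstructure}, and embed $\R$ into $\proj^1$. Working with the closures of the graph of $(F,id_J)$ in $\overline{M} \times J \times \proj^1$ and of the relevant image in $\proj^1 \times J$, the induced mappings onto their (compact-fibered) images become proper over $J$, while remaining semialgebraic.

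Third, apply Fukuda's Lemma (Lemma \ref{stratifiedmapping}) to the proper Nash mappings $(F,id_J)$ and $q$ on the compactified sets: this yields finite Whitney stratifications of Nash class $\mathcal{S}(A)$ and $\mathcal{S}(B)$ on the (closed) domain $A \supset M \times J$ and target $B \supset \R \times J$ such that both mappings become stratified, and the stratifications are compatible with the locus at infinity, with $M \times J$, with $\R \times J$, and with the critical-value set $\{(f_t(S(f_t)),t)\}$. Next, invoke Theorem \ref{lojasiewicz} applied to $J$ together with the finite family of semialgebraic subsets of $J$ obtained by projecting the strata of $\mathcal{S}(B)$ via $q$: this produces a finite subdivision $J = Q_1 \cup \cdots \cup Q_u$ into Nash open simplices with the property that over each $Q_i$ the stratifications restrict consistently. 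Finally, on each $Q_i$ I use Lemma \ref{SATlemma} (which is exactly tailored to this situation) to pass to a further finite refinement, if necessary, securing Thom $(a_F)$-regularity along each stratum of $Q_i$, and then apply Shiota's semialgebraic version of Thom's 2nd Isotopy Lemma (Theorem \ref{s2ndisotopy}) over $Q_i$. This produces semialgebraic homeomorphisms trivialising $(F,id_J)$ over $Q_i$; projecting onto the source and target factors gives the semialgebraic $\mathcal{RL}$-triviality of $\{f_t\}_{t \in Q_i}$.

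The main obstacle is twofold: (i) ensuring properness in the Nash (not polynomial) setting, which is handled by Theorem \ref{algebraicstructure} combined with the projectivisation trick, and (ii) obtaining Thom $(a_F)$-regularity uniformly after only finitely many subdivisions of $J$, which is precisely the content of Lemma \ref{SATlemma} combined with Fukuda's Lemma. The remaining steps are essentially bookkeeping to ensure that each subdivision is itself a Nash open simplex, which is guaranteed by Theorem \ref{lojasiewicz}.
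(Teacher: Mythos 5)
Your proposal does not address the statement it was supposed to prove. The statement is Theorem \ref{algebraicstructure} itself --- Shiota's theorem that every Nash manifold is Nash diffeomorphic to a nonsingular affine algebraic variety. What you have written is instead an outline of the proof of Theorem \ref{finitenessNF} (the finiteness theorem for a Nash family of Nash functions), and its very first step is ``by Theorem \ref{algebraicstructure}, I may assume that $M$ is a nonsingular affine algebraic subvariety.'' As an argument for Theorem \ref{algebraicstructure} this is circular: you assume the conclusion and then prove something else. The machinery you invoke --- projectivisation for properness, Fukuda's Lemma (Lemma \ref{stratifiedmapping}), semialgebraic triangulation (Theorem \ref{lojasiewicz}), Lemma \ref{SATlemma}, and Shiota's semialgebraic 2nd Isotopy Lemma (Theorem \ref{s2ndisotopy}) --- concerns trivialising a family of functions over a subdivided parameter space; none of it bears on the problem of equipping a given Nash manifold with the structure of a nonsingular affine real algebraic variety.

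For the record, the paper does not prove this statement either: it is quoted as a known result from Shiota \cite{shiota1}. An actual proof requires entirely different tools, roughly: the Artin--Mazur theorem to realise a Nash manifold as a union of connected components of the regular part of a real algebraic set, together with Nash/algebraic approximation arguments (in the spirit of Nash and Tognoli, carried out in the Nash category by Shiota) to upgrade this to a Nash diffeomorphism onto a nonsingular affine algebraic variety on the nose. If your task was to supply a proof of Theorem \ref{algebraicstructure}, you would need to develop or cite that approximation machinery; nothing in your current outline moves in that direction.
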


By this theorem, we can replace the Nash manifold $M$ with
a nonsigular, affine algebraic variety $W$.
On the other hand, by Theorem \ref{lojasiewicz}, $J$ is a
finite union of Nash open simplices, and a Nash open simplex
is Nash diffeomorphic to some Euclidean space.
Therefore, in order to show our finiteness result, we may assume 
from the beginning that $J$ is an Euclidean space $\R^d$. 
Then the above composed mapping becomes the following: 
\begin{equation}\label{algebraic} 
\begin{CD}
W \times \R^d
@>\text{$\Delta$}>> \R \times \R^d
@>\text{$q$}>> \R^d.
\end{CD}
\end{equation}
Note that $\Delta : W \times \R^d \to \R \times \R^d$ is a 
Nash family of Nash mappings between nonsingular, affine
algebraic varieties.

We next apply the Artin-Mazur theorem (M. Artin and B. Mazur
\cite{artinmazur}, M. Coste, J.M. Ruiz and M. Shiota
\cite{costeruizshiota}, M. Shiota \cite{shiota1}) for a Nash family
of Nash mappings, modified to our situation.
For an algebraic variety $X \subset \R^p$, we denote by $Reg(X)$
the smooth part of $X$. 

\begin{thm}\label{artinmazur} (Artin-Mazur Theorem).
Let $W \subset \R^m$ be the above nonsingular algebraic variety,
and let $\Delta : W \times \R^d \to \R \times \R^d$ be the above
Nash mapping.
Then there exists a Nash mapping $H  : W \times \R^d \to \R^b$
with the following property:

\vspace{1mm}

Let 
$\tau : (W \times \R^d) \times (\R \times \R^d) \times \R^b \to W \times \R^d$ 
and
$\pi : (W \times \R^d) \times (\R \times \R^d) \times \R^b \to \R \times \R^d$
be the canonical projections, 
let $G = (\Delta,H) : W \times \R^d \to (\R \times \R^d) \times \R^b$ be
the Nash mapping defined by $G(x,t) = (\Delta (x,t),H(x,t))$,
and let $X$ be the Zariski closure of graph $G$.
Then there is a union $L$ of some connected components of $X$ with
$\dim L = \dim X$ and $L \subset Reg(X)$ such that 
$\tau |_L : L \to W \times \R^d$ is a $t$-level preserving Nash 
diffeomorphism and $(\pi |_L) \circ (\tau |_L)^{-1} = \Delta$.
\end{thm}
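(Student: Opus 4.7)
The plan is to apply the classical Artin-Mazur trick in its family form. The key principle is that every Nash function is integral over the polynomial ring, so each component of $\Delta$ satisfies a polynomial identity; by adjoining finitely many carefully chosen auxiliary Nash functions $H$, we force the graph of $G = (\Delta, H)$ to land in the smooth locus of its Zariski closure $X$, and the first projection $\tau$ then becomes a Nash diffeomorphism from a union of components $L \subset X$ back to $W \times \R^d$.

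Since $\Delta = (F, id_{\R^d})$ in the setting of the outer proof, only the Nash component $F : W \times \R^d \to \R$ needs a polynomial witness. I would choose $P \in \R[x,t,y]$ of minimal $y$-degree with $P(x,t,F(x,t)) \equiv 0$; minimality guarantees $D := \partial P/\partial y$ is nonzero along the graph of $F$, so the reciprocal $h(x,t) := 1/D(x,t,F(x,t))$ is a Nash function on $W \times \R^d$. I would let $H$ consist of $h$ together with any Nash coordinates needed to express the nonsingular affine embedding of $W$ coming from Theorem \ref{algebraicstructure}. Setting $G = (\Delta, H)$, let $X$ be the Zariski closure of graph $G$ and let $L$ be the union of connected components of $X$ that meet graph $G$. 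The algebraic relation $h \cdot D(x,t,y) = 1$ inherited by $X$ forces $D$ nowhere vanishing on $X$, so the Jacobian criterion applied along graph $G$ places it inside $Reg(X)$; consequently $L \subset Reg(X)$, $\dim L = \dim X = \dim W + d$, and $\tau|_L$ is a local Nash diffeomorphism onto its image.

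The main obstacle is upgrading this local property to a genuinely global Nash diffeomorphism. Two things must be checked: properness of $\tau|_L$ (which follows from the fact that the polynomial equations defining $X$, together with the reciprocal relation, bound the $y$ and $h$ coordinates on $L$ whenever $(x,t)$ is bounded) and degree one of the resulting finite Nash covering (for which graph $G$ itself furnishes a global Nash section on each component of $L$ meeting it). Spurious components of $X$ on which $D$ vanishes identically are automatically excluded from $L$ by the reciprocal equation; any remaining component projecting dominantly onto $W \times \R^d$ can be excised by enlarging $H$ with further reciprocals of leading coefficients, which is where the full strength of the Artin-Mazur construction is needed. The $t$-level preservation of $\tau|_L$ is built in, since the product form $\Delta = (F, id_{\R^d})$ makes $t$ a distinguished coordinate respected by every polynomial defining $X$.
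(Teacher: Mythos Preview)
The paper does not supply a proof of this statement; it is invoked as a known theorem (Artin--Mazur, in the form given by Coste--Ruiz--Shiota and Shiota) adapted to the parametrised situation at hand. There is therefore no in-paper argument to compare your sketch against.

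Your outline follows the right overall strategy, but the second paragraph contains a concrete error. You claim that minimality of the $y$-degree of $P$ forces $D = \partial P/\partial y$ to be nowhere zero along the graph of $F$, so that $h := 1/D(x,t,F(x,t))$ is a Nash function on all of $W \times \R^d$. Minimality only guarantees that $D(x,t,F(x,t))$ is not \emph{identically} zero; it can certainly vanish on a proper Nash subset. Take $W = \R$, $d = 0$, and $F(x) = x\sqrt{x^2+1}$: this is Nash on $\R$, its minimal polynomial is $P(x,y) = y^2 - x^4 - x^2$, and $D = 2y$ vanishes at $(0,F(0)) = (0,0)$. The Zariski closure $\{y^2 = x^4 + x^2\}$ of the graph is singular at the origin and the graph passes straight through that singular point, so your reciprocal $h$ is undefined there and the device meant to force $L \subset Reg(X)$ fails exactly where it is needed. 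The proofs in the cited references repair this not with a single reciprocal but via a normalisation step (equivalently, by adjoining a generating set for the integral closure of the coordinate ring in the ring of Nash functions), which genuinely desingularises $X$ along the image of the graph; the ``further reciprocals of leading coefficients'' you propose at the end address spurious components, not this singularity phenomenon.
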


\noindent Therefore we can regard the Nash family of Nash mappings
$\Delta : W \times \R^d \to \R \times \R^d$ between nonsingular,
algebraic varieties as the restriction of some canonical projection
$\pi$ to a union of connected components $L$ of an algebraic variety
$X$.
In this way, we can reduce the Nash case to the algebraic case. 

We next consider the projectivisation so that the mappings $\pi$ and 
$q$ between projectivisated spaces are proper. 
Let us denote by $\R P_{[r]}$ the product
of $r$ real projective lines $\R P^1 \times \cdots \times \R P^1$.
We recall that $W$ is an algebraic variety in $\R^m$.
We denote by $\hat{X}$ the projectivisation of $X$ in
$(\R P_{[m]} \times \R P_{[d]}) \times (\R P^1 \times 
\R P_{[d]}) \times \R P_{[b]}$.
In this paper we regard $\R^r \subset \R P_{[r]}$ and 
$L \subset X \subset \hat{X}$. 

Let 
$$
\hat{\pi} : (\R P_{[m]} \times \R P_{[d]}) \times 
(\R P^1 \times \R P_{[d]}) \times \R P_{[b]} \to
\R P^1 \times \R P_{[d]}
$$ 
and
$\hat{q} : \R P^1 \times \R P_{[d]} \to \R P_{[d]}$ 
be the canonical projections.
We set $\hat{\Pi} := \hat{\pi} |_{\hat{X}}$. 
Then we consider a composite of projections:
\begin{equation}\label{cprojection}
\begin{CD}
(\R P_{[m]} \times \R P_{[d]}) \times (\R P^1 \times \R P_{[d]}) 
\times \R P_{[b]} \supset \hat{X}
@>\text{$\hat{\Pi}$}>> \R P^1 \times \R P_{[d]}
@>\text{$\hat{q}$}>> \R P_{[d]}.
\end{CD}
\end{equation} 

Note that $\hat{X}$ is an algebraic variety and $\hat{\Pi}$ is a family 
of polynomial functions defined on $\hat{X}$ with the parameter space 
$\R P_{[d]}$.
Using a similar argument to Fukuda \cite{fukuda1, fukuda2} 
on $(a_{f_t})$-regular stratification 
with Lojasiewicz's Semialgebraic Triangulation Theorem and 
Fukuda's Lemma for a stratified mapping, 
we have the following assertion. 

\begin{ass}\label{prestratification}
There exist finite Whitney stratifications of Nash class 
$\mathcal{S}(\hat{X})$ and 
$\mathcal{S}(\R P^1 \times \R P_{[d]})$ of $\hat{X}$ and
$\R P^1 \times \R P_{[d]}$, respectively, and a finite subdivision 
of $\R P_{[d]}$ into Nash open simplices 
$\R P_{[d]} = R_1 \cup R_2 \cup \cdots \cup R_e$ 
which satisfy the following.

(1) Let $\mathcal{S}(\R P_{[d]}) = \{ R_1, R_2, \cdots , R_e\}$. 
Then the projections
$$
\hat{\Pi} : (\hat{X},\mathcal{S}(\hat{X}))  \to 
(\R P^1 \times \R P_{[d]},\mathcal{S}(\R P^1 \times \R P_{[d]})),
$$ 
$$\hat{q} : (\R P^1 \times \R P_{[d]},\mathcal{S}(\R P^1 \times \R P_{[d]})) 
\to (\R P_{[d]},\mathcal{S}(\R P_{[d]}))$$ 
are proper stratified mappings. 

(2) For any $t \in R_i$, $1 \le i \le e$, the stratified mapping 
$$
\hat{\Pi}_t : 
(\hat{X}_t,\mathcal{S}(\hat{X})_t) \to 
((\R P^1 \times \R P_{[d]})_t,\mathcal{S}(\R P^1 \times \R P_{[d]})_t)
$$ 
is ($a_{\hat{\Pi}_t}$)-regular.

(3) The stratification $\mathcal{S}(\hat{X})$ is compatible with 
$L \subset X$ and $(q \circ \Pi)^{-1}(R_i)$, $1 \le i \le e$,
and the stratification $\mathcal{S}(\R P^1 \times \R P_{[d]})$ 
is compatible with $\hat{\Pi}(\hat{X})$, $\hat{\Pi}(L) = \pi(L)$, $\R \times \R^d$ 
and $q^{-1}(R_i) = R_i \times \R P_{[d]}$, $1 \le i \le e$.

(4) The stratification $\mathcal{S}(\R P_{[d]})$ is compatible with $\R^d$. 
(Therefore let us assume that $\R^d = R_1 \cup R_2 \cup \cdots \cup R_c$ 
for $c < e$ after this.) 
\end{ass}

We set $\hat{X}^{(i)} := \hat{X} \cap (q \circ \hat{\Pi})^{-1}(R_i)$, 
$\mathcal{S}(\hat{X}^{(i)}) := \{ (q \circ \hat{\Pi})^{-1}(R_i) \cap W \ | \  
W \in \mathcal{S}(\hat{X}) \}$, 
$\hat{\Pi}^{(i)} := \hat{\Pi} |_{\hat{X}^{(i)}}$, 
$\mathcal{S}(\R P^1 \times R_i) : = \{ 
\hat{q}^{-1}(R_i) \cap U \ | \ U \in \mathcal{S}(\R P^1 \times \R P_{[d]}) \}$ 
and $\hat{q}^{(i)} := \hat{q} |_{\R P^1 \times R_i}$, $1 \le i \le c$. 
Let us apply Lemma \ref{SATlemma} to the composit of stratified mappigs 
\begin{equation}\label{cstratified}
\begin{CD}
(\hat{X}^{(i)},\mathcal{S}(\hat{X}^{(i)}))
@>\text{$\hat{\Pi}^{(i)}$}>> (\R P^1\times R_i,\mathcal{S}(\R P^1 \times R_i))
@>\text{$\hat{q}^{(i)}$}>> (R_i,\{ R_i\}),
\end{CD}
\end{equation} 
$1 \le i \le c$.
Then, subdividing $R_i$ into finitely many Nash open simplices if necessary,  
the stratified mapping
$$
\hat{\Pi}^{(i)} : (\hat{X}^{(i)},\mathcal{S}(\hat{X}^{(i)})) \to 
(\R P^1\times R_i,\mathcal{S}(\R P^1 \times R_i))
$$
is semialgebraically trivial over $R_i$, $1 \le i \le c$.
Let $L_i : = L \cap (q \circ \hat{\Pi})^{-1}(R_i)$.
By construction, $\mathcal{S}(\hat{X}^{(i)})$ is compatible with $L_i$ 
and $\mathcal{S}(\R P^1 \times R_i))$ is compatible with 
$\hat{\Pi}(L_i) = \pi(L_i)$ and $\R \times R_i$, $1 \le i \le c$. 
Therefore $\pi |_{L_i} : L_i \to \R \times R_i$ is semialgebraically 
trivial over $R_i$, $1 \le i \le c$. 
It follows that there exists a finite subdivision of $J$ into Nash open 
simplices $J = Q_1 \cup \cdots \cup Q_u$ such that 
$$
\{ f_t : M \to \R \ | \ t \in Q_i \} 
$$
is semialgebraically $\mathcal{RL}$ trivial over each $Q_i$.

This completes the proof Theorem \ref{finitenessNF}.
\end{proof}



\begin{thebibliography}{99}

\bibitem{artinmazur} M. Artin and B. Mazur :
  \emph{On periodic points}, Ann. of Math. \textbf{81} (1965),
  82--99.

\bibitem{benedettishiota} R. Benedetti and M. Shiota :
  \emph{Finiteness of semialgebraic types of polynomial
  functions}, Math. Zeitschrift \textbf{208} (1991), 589--596. 




\bibitem{bochnakcosteroy} J. Bochnak, M. Coste and M.-F. Roy :
  \emph{Real Algebraic Geometry},
  Ergebnisse der Mathematik und ihrer Grenzgebiete \textbf{36}, 
  Springer, 1998.

\bibitem{coste} M. Coste :
  \emph{An Introduction to Semialgebraic Geometry},
  Dip. Mat. Univ. Pisa, Dottorato di Ricerca in Matematica,
  Istituti Editoriali e Poligrafici Internazionali, Pisa, 2000.

\bibitem{costeruizshiota} M. Coste, J. M. Ruiz and M. Shiota :
  \emph{Global problems on Nash functions}, Rev. Mat. Complut.
  \textbf{17} (2004), 83--115.

\bibitem{duplessiswall} A. du Plessis and C. T. C. Wall 
  \emph{The geometry of topological stability}, 
  London Mathematical Society New Series \textbf{9},
  Oxford Univ. Press, 1995.

\bibitem{fukuda1} T. Fukuda : 
  \emph{Types topologiques des polyn\^omes},
  Publ. Math. I.H.E.S. \textbf{46} (1976), 87--106.

\bibitem{fukuda2} T. Fukuda : 
  \emph{Deformations of real analytic functions and 
  the natural stratification of the space of real analytic 
  functions}, Nagoya Math. J. \textbf{63} (1976), 
  139--152.

\bibitem{fukuda3} T. Fukuda : Lectures at Graduate School 
of Chiba University, 1978. 

\bibitem{gibsonetal} C. G. Gibson, K. Wirthm\"uller, 
  A. du Plessis and E. J. N. Looijenga : 
  \emph{Topological stability of smooth mappings},
  Lect. Notes in Math., Springer, \textbf{552} (1976).


\bibitem{hironaka2} H. Hironaka,
  \emph{Triangulation of algebraic sets},
  Algebraic Geometry, (Proc. Sympos. Pure Math. \textbf{29},
  Humboldt State Univ., Arcata, California, 1974)
  pp. 165--185, Amer. Math. Soc., Providence, R. I., 1975.


\bibitem{koike1} S. Koike : 
  \emph{On condition $(a_f)$ of a stratified mapping},
  Ann. Inst. Fourier \textbf{33} (1983), 177--184.

\bibitem{koike2} S. Koike :
 \emph{Nash trivial simultaneous resolution for a family of 
 zero-sets of Nash mappings},
 Math. Zeitschrift \textbf{234} (2000), 313--338. 

\bibitem{koike3} S. Koike : 
  \emph{Finiteness theorem for blow-semialgebraic triviality
  of a family of three-dimensional algebraic sets},
  Proc. London Math. Soc. \textbf{105} (2012), 506--540.

\bibitem{koikeshiota} S. Koike and M. Shiota : 
  \emph{Finiteness of semialgebraic types of Nash
  mappings defined on a Nash surface},
  Math. Zeitschrift \textbf{292} (2019), 819--837.

\bibitem{kurdykaparusinski} K. Kurdyka and A. Parusi\'nski : 
  \emph{$w_f$-stratification of subanalytic functions 
  and the Lojasiewicz inequality}, 
  C. R. Acad. Sci. Paris S\'er. I Math. 
  \textbf{318} (1994), 129--133.

\bibitem{lojasiewicz1} S. Lojasiewicz :
  \emph{Triangulation of semi-analytic sets},
  Ann. Scu. Norm. di Pisa \textbf{18} (1964), 449--474.

\bibitem{lojasiewicz2} S. Lojasiewicz :
  \emph{Ensembles semi-analytiques},
  I.H.E.S. Lect. Note, 1965. 

\bibitem{malgrange} B. Malgrange : 
 \emph{Ideals of differentiable functions}, 
 Oxford Univ. Press, 1966.

\bibitem{mather} J. N. Mather :
  \emph{Notes on topological stability}, 
  Bull. Amer. Math. Soc. \textbf{49} (2012), 475-506.

\bibitem{seidenberg} A. Seidenberg :
  \emph{A new decision method for elementary algebra},
  Ann. of Math. \textbf{60} (1954), 365--374. 

\bibitem{shiota1}  M. Shiota : 
  \emph{Nash manifolds}, 
  Lect. Notes in Math. \textbf{1269},
  Springer-Verlag, 1987.

\bibitem{shiota2}  M. Shiota : 
  \emph{Geometry of subanalytic and semialgebraic sets},
  Progress in Math. \textbf{150}, Birkh\"auser, 1997.

\bibitem{shiota3} M. Shiota :
  \emph{Relation between equivalence relations of maps and functions,}
  Real Analytic and Algebraic Singularities, Pitman Res. Notes in Math.
  Ser. \textbf{381}, Longman, Harlow, 1998, pp. 114--144.

\bibitem{thom} R. Thom :
  \emph{Local topological properties of differentiable mappings},
  Oxford Univ. Press, London, 1964, pp. 191--202.

\bibitem{trotman1} D. J. A. Trotman :
  \emph{Geometric versions of Whitney regularity for smooth
  stratifications,} Ann. Sci. \'Ecole Norm. Sup. \textbf{12} (1979),
  453--463.

\bibitem{trotman2} D. J. A. Trotman :
  \emph{Comparing regularity conditions on stratifications}, 
  Singularities, Proc. Sympo. Pure Math. \textbf{40} Part 2
  (1983), 575--586.

\bibitem{whitney1} H. Whitney :
  \emph{Local topological properties of analytic varieties},
  Differential and Combinatorial Topology (ed. S.S. Cairns),
  A Symposium in Honor of M. Morse,
  Princeton Univ. Press (1965),pp. 205--244.

\bibitem{whitney2} H. Whitney :
  \emph{Tangents to an analytic variety},
  Ann. of Math., \textbf{81} (1965), 496--549.


\end{thebibliography}
\end{document}